\newtheorem{theorem}{Theorem}[section]
\newtheorem{proposition}[theorem]{Proposition}
\newtheorem{lemma}[theorem]{Lemma}
\newtheorem{corollary}[theorem]{Corollary}
\newtheorem{remark}[theorem]{Remark}
\numberwithin{equation}{section}
\title[Fractional powers approach of maximal accretive]{Fractional powers approach of maximal accretive operators for a class of third order abstract Cauchy problems} 
\author[M. Benharrat]{Mohammed Benharrat}
\address[M. Benharrat]{
	Ecole Nationale Polytechnique d'Oran-Maurice Audin (Ex. ENSET d'Oran), 
	BP 1523 Oran-El M'naouar, 31000 Oran, Alg\'{e}rie.}
\email{mohammed.benharrat@enp-oran.dz}
\author[F. D. M. Bezerra]{Flank D. M. Bezerra}
\address[F. D. M. Bezerra]{Departamento de Matem\'atica, Universidade Federal da Para\'iba, 58051-900 Jo\~ao Pessoa PB, Brazil.}
\email{flank@mat.ufpb.br}
\date{\today
\newline
The first author gratefully
acknowledge the financial support from the Laboratory of Fundamental and Applicable Mathematics of Oran (LMFAO) and  the Algerian research project: PRFU, no. C00L03ES310120220003 (D.G.R.S.D.T). The second author gratefully
acknowledge the financial support from CNPq/Brazil grant number 303039/2021-3.}
\begin{document}
\maketitle

\begin{abstract}
In this paper we explore the theory of fractional powers of maximal accretive  operators  to obtain results of existence, regularity and behavior asymptotic of solutions for linear abstract evolution equations of third  order in time.

\vskip .1 in \noindent {\it Mathematics Subject Classification 2010}: 34A08; 47D06; 47D03
\newline {\it Key words and phrases:} Accretive operator; ; analytic semigroup; fractional powers;  linear evolution equation of  third order; strongly continuous semigroup.

\end{abstract}

\tableofcontents

\listoffigures

\section{Introduction and preliminaries} 

In this paper we consider the  following abstract linear evolution equation of  third order,
\begin{equation}\label{Eq1}
u'''(t) +Au(t) = 0,\quad t> 0,
\end{equation}
with initial conditions given by
\begin{equation}\label{IC}
u(0)=u_0\in \mathcal{H}^{\frac{2}{3}},\ u''(0)=u_1\in \mathcal{H}^{\frac{1}{3}},\ u'''(0)=u_2\in \mathcal{H}, 
\end{equation}
where $\mathcal{H}$ be a separable Hilbert space and $A: \mathcal{D}(A)\subset \mathcal{H}\to \mathcal{H}$ be a maximal $\omega$-accretive operator, $0\leq \omega\leq \dfrac{\pi}{2} $. Here $u'(t)=\dfrac{du}{dt}(t).$

In this paper we explore the theory of fractional powers of maximal accretive  operators to obtain results of existence, regularity and behavior asymptotic of solutions for  \eqref{Eq1}-\eqref{IC}.

To better present our results in this paper, we introduce first  some notations and terminologies. 
Recall that the numerical range of  an operator $A$ with domain $\mathcal{D}(A)$ in $\mathcal{H}$, is defined by
\begin{equation*}
	W(A):=\{ \left\langle Au, u\right\rangle_\mathcal{H}  : \quad  u\in \mathcal{D}  (A) , \quad \text{  } \left\|u\right\|_\mathcal{H}=1\}.
\end{equation*}

An operator $A$ is said to be accretive if if
its numerical range is contained in the closed right half-plane $\overline{\mathbb{C}_+} := \{ z\in\mathbb{C} : {\rm Re} (z) \geq 0\}$, and $A$ is \textit{maximal accretive}, or \textit{$m$-accretive} for short, if has no proper accretive extensions in $\mathcal{H}$. It is known (see \cite{Kato61}) that
a closed, maximal accretive operator is densely defined, that $A$ is closed
and $m$-accretive if and only if $A^{*}$ is, and also if and only if $-A$ is
the infinitesimal generator of a contraction semigroup $\{\mathcal{T}(t)\}_{t\geq 0}$, that
is, $\Vert\mathcal{T}(t)\Vert_{\mathcal{B}(\mathcal{H})}\leq 1$\footnote{Here, $\mathcal{B}(E)$ denotes the space of bounded linear operators defined in a Banach space $E$ into self endowed with norm
	\[
	\|S\|_{\mathcal{B}(E)}:=\sup_{x\in E,\ x\neq0}\dfrac{\|Sx\|_{E}}{\|x\|_{E}},\ \forall S\in \mathcal{B}(E).
	\]    
}; $A$ is called dissipative, if  $-A$  is accretive. 
For  $\omega \in [0,\pi/2)$, we denote by
$$
\mathcal{S}(\omega):=\left\{z\in\mathbb{C} : |\arg (z)|< \omega \right\},
$$
the open sector of $\mathbb{C}$ with semi-angle $\omega$. An operator $A$  will be said is \emph{$\omega$-accretive}   if 
\begin{equation*}
	\label{sectorial}
	W(A) \subset \overline{\mathcal{S}(\omega)}:=\left\{z\in\mathbb{C} : |\arg (z)|\leq \omega \right\},
\end{equation*}
or, equivalently, $$|{\rm Im} \left(\left\langle Ax, x\right\rangle_\mathcal{H} \right) |\!\le\! \tan(\omega) \, {\rm Re} \left(\left\langle Ax, x\right\rangle_\mathcal{H} \right), \quad \text{for all }  x \in  \mathcal{D}  (A).$$

An $\omega$-accretive  operator  $A$ is called   m-$\omega$-accretive, if it is $m$-accretive. We have $A$ is m-$\omega$-accretive if and only if operators $e^{\pm i\theta} A$ are m-accretive for $\theta=\frac{\pi}{2}-\omega$, $0 < \omega\leq  \pi/2$.
If $\omega=\pi/2$, then we have $\overline{\mathcal{S}(\pi/2)}=\overline{\mathbb{C}_+}$ and m-$\pi/2$-accretivity means m-accretivity. Also, m-$0$-accretive is the nonegative selfadjoint operator. 

The resolvent set of an m-$\omega$-accretive operator $A$ contains the
set $\mathbb{C} \setminus \overline{\mathcal{S}(\omega)}$ and
\[
\|(\lambda I-A)^{-1}\|_{\mathcal{B}(\mathcal{H})} \leq \cfrac{1}{{\rm dist}\left(\lambda,\mathcal{S}(\omega)\right)},
\quad \lambda\in \mathbb{C}\setminus \overline{\mathcal{S}(\omega)}.
\]
Consequently,  the numerical range of  m-$\omega$-accretive operator, $0 \leq \omega\leq  \pi/2$,  has the so-called spectral inclusion property, 
\begin{equation}\label{specinclusion}
	\sigma (A)\subset\overline{ W(A)}\subset \overline{\mathcal{S}(\omega)}.
\end{equation}

It is well  known that  if  $A$  is m-$\omega$-accretive, $0\leq \omega<\pi/2$, then it generates contractive $C_0$-semigroup $\mathcal{T}(t)=e^{-tA}$, $t\ge 0$, and has an holomorphic continuation into the sector $ \overline{\mathcal{S}(\pi/2-\omega)}$, see Theorem IX-1.24 in \cite{Kato}. In this case, we can write $\mathcal{T}(z)=e^{-zA}$, for all $z\in  \overline{\mathcal{S}(\pi/2-\omega)}$; moreover, we have
$$\left\|e^{-zA}\right\|_{\mathcal{B}(\mathcal{H})} \leq 1, \quad z\in \overline{\mathcal{S}(\pi/2-\omega)}. $$


Let $A$ is $m$-accretive and a number  $\alpha\!\in\!(0,1)$; then there exist several equivalent definitions
of a fractional power of the operator $A$. We give one of them: an operator $A^\alpha$ is the closure with $\mathcal{D} (A)$ of the operator
\begin{equation}\label{FRACP}
A^\alpha  =\dfrac{\sin (\pi \alpha )}{\pi} \int_{0}^{\infty}\lambda^{\alpha -1}A(I\lambda+A)^{ -1}dt,	
\end{equation}
see \cite{Balakrishnan60}. The operators $A^{\alpha}$ are m-$(\alpha\pi/2)$-accretive and  $A^\alpha A^\beta x=A^{\alpha +\beta }x$, for all $x\in \mathcal{D} (A^{\alpha +\beta })$, where $\beta>0$ and $\alpha+\beta<1$. Fractional powers have the important multiplicative property
$$
( A^\alpha)^\beta=A^{\alpha \beta }
$$
when  $0<\alpha<1$  and $\beta>0$, \cite[Theorem 4.2]{Martinez88}.
If $\alpha\!\in\! (0,1/2)$, then
$\mathcal{D} (A^{\alpha})= \mathcal{D} (A^{*\alpha})$ holds  in a topological sens, where  the domains of definition are equipped  with corresponding graph norms, see \cite{Kato61}.  Such fractional powers have been defined for a more
general class of linear operators in Banach spaces by several authors (see,
among others, Amann \cite{Amann1995}, Balakrishnan \cite{Balakrishnan60}, Hasse \cite{Hasse2006} and Martinez \cite{Martinez88}).


Denote by $\mathcal{H}^{\alpha}=\mathcal{D}(A^{\alpha})$ for $0\leqslant\alpha\leqslant1$ (taking $A^0:=I$ on $\mathcal{H}^0:=\mathcal{H}$ when $\alpha=0$). Recall that $\mathcal{H}^{\alpha}$ is dense in $\mathcal{H}$  for all $0\leqslant\alpha\leqslant1$, for details see Amann \cite[Theorem 4.6.5]{Amann1995}. The fractional power space $\mathcal{H}^\alpha$ endowed with the norm 
\[
\|\cdot\|_{\mathcal{H}^\alpha}:=\|A^{\alpha} \cdot\|_\mathcal{H}
\] 
is a Banach space.  With this notation, we have $\mathcal{H}^{-\alpha}=(\mathcal{H}^\alpha)'$ for all $\alpha>0$, see Amann \cite{Amann1995}, Sobolevski\u{\i} \cite{Sobolevskii1966} and Triebel \cite{Triebel1978} for the characterization of the negative scale. 

Naturally, one can convert \eqref{Eq1}-\eqref{IC} to the first order Cauchy problem. However, the linearized operator   matrix is in general not closed.  Even if it is closed, it does not generate a strongly continuous semigroup unless under very restrictive conditions. This forces us to seek a suitable phase space to change the awkward situation. The basic idea is to calculate the fractional powers of order $\alpha\in(0,1)$ of  the  linearized operator and we are interested in finding for which values of  $\alpha\in [0,1]$ the fractional first order system is well-posed in some sense. For such $\alpha$, we give an  explicit representation formula of the solution of the  following associated  evolution equations of third order in time, 
\begin{equation}\label{Eq2int}
	u'''(t)+3\varUpsilon_0^\alpha A^{\frac{\alpha}{3}}u''(t)+ 3\varUpsilon_0^\alpha A^{\frac{2\alpha}{3}}u''(t)+A^{\alpha}u (t)= 0,\quad t> 0,
\end{equation}
with initial conditions given by
\begin{equation}\label{IC2int}
	u(0)=\varphi\in \mathcal{H}^{\frac{2}{3}},\ u'(0)= \psi \in \mathcal{H}^{\frac{2-\alpha}{3}},\ u''(0)=\xi\in \mathcal{H}^{\frac{2-\alpha}{3}}, 
\end{equation}
where $\varUpsilon_0^\alpha=2\cos\dfrac{2\pi\alpha}{3} +1$,  $0<\alpha< \alpha^*$ and $\alpha^*= \dfrac{3\pi}{4\pi+2\omega}$.

Fractional powers approach of operators for Cauchy problems associated with evolutionary equations has been studied in some situations, in the sense of existence, regularity and behavior asymptotic of solutions for theses problems, see e.g., \cite{Belluzi2022,BezerraCCN2017, BezerraCN2020, BezerraN2019,Chen2023,Cholewa2018, Dlotko2018,Wu2003}, and references therein.

We extend the work done in \cite{BezerraS2020}, where  $A$ is assumed a self-adjoint positive definite operator ($\omega=0$). They proved that  \eqref{Eq2int}-\eqref{IC2int} admits unique solution, without given explicitly,    for  all  $0<\alpha< \frac{3}{4}$. This means that    $\alpha^*=\frac{3}{4}$ when $\omega=0$. The analogy of this idea to the  general case, up to now is still not obvious. Also some models of continuous mechanics are reduced to differential equation  with sectorial
operators, see \cite{Adamjan2002, Thompson72} and references therein. In this cases methods,
developed for self-adjoint operators, cannot be applied. Here, we continue the analysis done in the previously cited works.  
  
  Summaries of our main results and the structure of the paper are as follows. In Section \ref{Sec2}  we define the linearized  operator, denoted by $\mathbb{A}$, associated to the Cauchy problem \eqref{Eq1}-\eqref{IC} in an appropriate phase space. We study the spectral properties of $\mathbb{A}$. We prove that $\mathbb{A}$ cannot generate a strongly continuous  semigroup of any kind but it is of  non-negative type (and not necessarily accretive) operator. Moreover, under Balakrishnan formula, see \cite{Balakrishnan60}, we explicitly calculate the fractional powers of the operator $\mathbb{A}$ for $0<\alpha<1$, and we study spectral properties of the fractional power operators $\mathbb{A}^\alpha$. More precisely, we  establish the maximum subinterval of $(0, 1)$ where $\alpha$ is taken such that the  $-\mathbb{A}^\alpha$ is a generator, namely $-\mathbb{A}^\alpha$  generates a strongly continuous analytic semigroup  if and only if $0<\alpha< \alpha^*$ with $\alpha^*= \dfrac{3\pi}{4\pi+2\omega}$ and it generates strongly continuous semigroup  if $\alpha^*= \dfrac{3\pi}{4\pi+2\omega}$. 
  Finally, Section \ref{Sec3} provides a result of existence and uniqueness of the fractional partial differential equations \eqref{Eq2int}-\eqref{IC2int}  associated with the fractional operator $\mathbb{A}^\alpha$.  We give an  explicit representation formula of the solution of the Cauchy problem of third order  \eqref{Eq2int}-\eqref{IC2int}.

\section{The linearized operator  and its fractional powers}\label{Sec2}
Assume that $A$ be m-$\omega$-accretive operator, $0 \leq \omega\leq  \pi/2$. Let consider the following phase space 
\[
X=\mathcal{H}^{\frac{2}{3}}\times \mathcal{H}^{\frac{1}{3}}\times \mathcal{H}
\] 
which is a Banach space equipped with the norm given by
\[
\|\cdot\|_X^2=\|\cdot\|^2_{\mathcal{H}^{\frac{2}{3}}}+\|\cdot\|^2_{\mathcal{H}^{\frac{1}{3}}}+\|\cdot\|^2_{\mathcal{H}}.
\]

By letting $v=\frac{du}{dt}$, $w=\frac{d^2u}{dt^2}$, we can write the problem \eqref{Eq1}-\eqref{IC} as a Cauchy problem on $X$,   
\begin{equation}\label{Linprob}
	\begin{cases}
		\dfrac{d {\bf u}}{dt}+ \mathbb{A}{\bf u} = 0,\ t> 0,\\
		{\bf u}(0)={\bf u}_0,
	\end{cases}
\end{equation}
where the unbounded linear operator $\mathbb{A} : \mathcal{D}(\mathbb{A})\subset X \to X$ is defined by
\begin{equation}\label{LO}
	\mathbb{A} = \begin{bmatrix}
		0 & -I & 0 \\
		0 & 0 & -I \\ 
		A & 0 & 0
	\end{bmatrix}
\end{equation}
and
\begin{equation}\label{DLO}
	\mathcal{D}(\mathbb{A})=\mathcal{H}^1\times \mathcal{H}^{\frac{2}{3}}\times \mathcal{H}^{\frac{1}{3}}.
\end{equation}
Here
\[
{\bf u}=\begin{bmatrix} u\\ v \\ w\end{bmatrix}\quad \text{ and } \quad {\bf u}_0=\begin{bmatrix} u_0\\ u_1 \\ u_2 \end{bmatrix}.
\] 

\begin{remark}
	We note that $\mathbb{A}$ is not an accretive  operator on $X$. Indeed, if $u$ be a non-trivial element in $\mathcal{D}(A)$ and 
	\[
	\mathbf{u}=\left[\begin{smallmatrix}
		u\\
		0\\
		-u 
	\end{smallmatrix}\right]
	\] 
	then 
	\[
	{\rm Re}\left<\mathbb{A} \mathbf{u},\mathbf{u}\right>_X=\left<\left[\begin{smallmatrix}
		0\\
		-u\\
		Au 
	\end{smallmatrix}\right],\left[\begin{smallmatrix}
		u\\
		0\\
		-u 
	\end{smallmatrix}\right]\right>_X=	-{\rm Re} \langle Au,u\rangle_\mathcal{H}<0.
	\]
	Explicitly, this means that $-\mathbb{A}$ is not an infinitesimal generator of a strongly continuous semigroup of contractions on $Y$.  More precisely, we see  later, Remark \ref{rem:sp},  that $-\mathbb{A}$ cannot be the infinitesimal generator of a strongly continuous semigroup of any type on $X$.
\end{remark}
\subsection{Spectral properties of the operator $\mathbb{A}$}
\begin{lemma}\label{SPLO}Assume that $A$ be m-$\omega$-accretive operator, $0 \leq \omega\leq  \pi/2$. Then $\mathbb{A}$ is closed densely defined operator and  the spectrum 
	of $\mathbb{A}$ is given by
	\begin{equation}\label{spectrum} 
		\sigma(\mathbb{A})=\{\lambda \in \mathbb{C}: \lambda^3 \in \sigma(A)\}.
	\end{equation}
\end{lemma}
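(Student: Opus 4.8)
The plan is to establish both assertions — that $\mathbb{A}$ is closed and densely defined, and the spectral identity \eqref{spectrum} — from a single explicit resolvent computation that exploits the companion–matrix structure of $\mathbb{A}$. Density of $\mathcal{D}(\mathbb{A})=\mathcal{H}^1\times\mathcal{H}^{\frac23}\times\mathcal{H}^{\frac13}$ in $X=\mathcal{H}^{\frac23}\times\mathcal{H}^{\frac13}\times\mathcal{H}$ I would check factor by factor: for $\beta>\alpha$ the space $\mathcal{H}^\beta$ is dense in $\mathcal{H}^\alpha$ (a standard feature of the fractional scale, obtained via the approximations $n(nI+A)^{-1}x\to x$, which lie in $\mathcal{D}(A)=\mathcal{H}^1$), so each inclusion $\mathcal{H}^1\subset\mathcal{H}^{\frac23}$, $\mathcal{H}^{\frac23}\subset\mathcal{H}^{\frac13}$, $\mathcal{H}^{\frac13}\subset\mathcal{H}$ is dense. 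Closedness I would not prove directly; it is automatic once the resolvent set is shown to be nonempty.

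The core computation is to solve $(\lambda I-\mathbb{A})\mathbf{u}=\mathbf{f}$ with $\mathbf{u}=(u,v,w)^\top$, $\mathbf{f}=(f_1,f_2,f_3)^\top$. The three scalar equations read
\[
\lambda u+v=f_1,\qquad \lambda v+w=f_2,\qquad -Au+\lambda w=f_3,
\]
and eliminating $v$ and $w$ by back–substitution gives $(\lambda^3 I-A)u=\lambda^2 f_1-\lambda f_2+f_3$, together with $v=f_1-\lambda u$ and $w=f_2-\lambda f_1+\lambda^2 u$. Thus whenever $\lambda^3\in\rho(A)$ we may set $u=(\lambda^3 I-A)^{-1}(\lambda^2 f_1-\lambda f_2+f_3)$ and read off a unique $\mathbf{u}\in\mathcal{D}(\mathbb{A})$; this furnishes an explicit formula for $(\lambda I-\mathbb{A})^{-1}$ and proves $\{\lambda:\lambda^3\in\rho(A)\}\subset\rho(\mathbb{A})$. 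Since $\rho(A)\supset\mathbb{C}\setminus\overline{\mathcal{S}(\omega)}\neq\emptyset$, we may choose $\lambda$ with $\lambda^3\in\rho(A)$, so $\rho(\mathbb{A})\neq\emptyset$ and $\mathbb{A}$ is closed.

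For the opposite inclusion I would argue by contraposition: assume $\lambda\in\rho(\mathbb{A})$ and deduce $\lambda^3\in\rho(A)$. Inserting $\mathbf{f}=(0,0,g)^\top$ with arbitrary $g\in\mathcal{H}$ forces $v=-\lambda u$, $w=\lambda^2 u$ and $(\lambda^3 I-A)u=g$, so $\lambda^3 I-A$ is onto; it is one–to–one because any $u\neq0$ with $(\lambda^3 I-A)u=0$ would yield the eigenvector $(u,-\lambda u,\lambda^2 u)^\top$ of $\mathbb{A}$ for $\lambda$, contradicting injectivity of $\lambda I-\mathbb{A}$. Boundedness of $(\lambda^3 I-A)^{-1}$ on $\mathcal{H}$ then follows by composing the isometric embedding $g\mapsto(0,0,g)^\top$ into $X$, the bounded operator $(\lambda I-\mathbb{A})^{-1}$, the projection onto the first coordinate, and the continuous embedding $\mathcal{H}^{\frac23}\hookrightarrow\mathcal{H}$. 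Hence $\lambda^3\in\rho(A)$, which gives $\{\lambda:\lambda^3\in\sigma(A)\}\subset\sigma(\mathbb{A})$ and completes \eqref{spectrum}.

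The step I expect to demand the most care is verifying that the formula for $(\lambda I-\mathbb{A})^{-1}$ is a \emph{bounded} map of $X$ into $\mathcal{D}(\mathbb{A})$, and not merely an algebraic prescription: each coordinate must be shown to land in the correct fractional power space with norm controlled by $\|\mathbf{f}\|_X$. This reduces to the boundedness on $\mathcal{H}$ of $A^{\gamma}(\lambda^3 I-A)^{-1}$ for the exponents $\gamma\in\{\tfrac13,\tfrac23\}$ occurring here — a consequence of $\gamma<1$ and the commutation of fractional powers with the resolvent — together with the continuity of the embeddings $\mathcal{H}^1\hookrightarrow\mathcal{H}^{\frac23}\hookrightarrow\mathcal{H}^{\frac13}\hookrightarrow\mathcal{H}$.
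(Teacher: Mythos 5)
Your core computation coincides with the paper's: the same back-substitution reducing $(\lambda I-\mathbb{A})\mathbf u=\mathbf f$ to $(\lambda^3I-A)u=\lambda^2f_1-\lambda f_2+f_3$, the same eigenvector $(u,-\lambda u,\lambda^2u)^\top$ for injectivity of $\lambda^3I-A$, and the same choice $\mathbf f=(0,0,g)^\top$ for surjectivity. Where you genuinely deviate is the logical scaffolding: the paper first proves closedness of $\mathbb{A}$ directly (a sequence argument using only that $A$ is closed), after which bijectivity of $\lambda I-\mathbb{A}$ alone yields $\lambda\in\rho(\mathbb{A})$ via the closed graph theorem, so no norm estimates are needed anywhere in the lemma; you instead derive closedness from nonemptiness of the resolvent set, which obliges you to verify by hand that the explicit inverse is bounded from $X$ into $X$. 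That is a legitimate trade, but it makes the quantitative step load-bearing rather than optional.

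And it is exactly there that your sketch has a soft spot. With the paper's convention $\|x\|_{\mathcal{H}^\alpha}=\|A^\alpha x\|_{\mathcal{H}}$, the inclusions $\mathcal{H}^1\subset\mathcal{H}^{2/3}\subset\mathcal{H}^{1/3}\subset\mathcal{H}$ are \emph{not} continuous in general: $\|A^{1/3}x\|\le C\|A^{2/3}x\|$ requires boundedness of $A^{-1/3}$, i.e.\ essentially $0\in\rho(A)$, which is assumed only later in the paper (Corollary \ref{PTOT} onward), not in Lemma \ref{SPLO}. Your reliance on these embeddings comes from writing $v=f_1-\lambda u$ and $w=f_2-\lambda f_1+\lambda^2u$, and likewise from factoring $(\lambda^3I-A)^{-1}$ through the first coordinate and $\mathcal{H}^{2/3}\hookrightarrow\mathcal{H}$ in the reverse inclusion. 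Both are easily repaired: either estimate the full resolvent matrix entrywise (e.g.\ $A^{1/3}v=-A^{2/3}(\lambda^3I-A)^{-1}A^{2/3}f_1+\lambda^2(\lambda^3I-A)^{-1}A^{1/3}f_2-\lambda A^{1/3}(\lambda^3I-A)^{-1}f_3$, which only uses boundedness of $A^{\gamma}(\lambda^3I-A)^{-1}$, $\gamma\in\{1/3,2/3\}$, and the correct norms of the data), or simply prove closedness of $\mathbb{A}$ directly as the paper does and drop the boundedness verification altogether; similarly, in the converse direction boundedness of $(\lambda^3I-A)^{-1}$ is automatic from closedness of $A$ once bijectivity is shown, so the composition argument is unnecessary. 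With one of these repairs your proof is complete and equivalent in substance to the paper's.
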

\begin{proof}
Firstly, we show that the operator $\mathbb{A}$ is closed. Indeed, if ${\bf u}_n={\left[\begin{smallmatrix} u_{n}\\ v_{n} \\w_{n} \end{smallmatrix}\right]}\in \mathcal{D}(\mathbb{A})$ with ${\bf u}_n\to {\bf u}={\left[\begin{smallmatrix} u\\ v \\ w \end{smallmatrix}\right]}$ in $X$ as $n\to\infty$, and $\mathbb{A} {\bf u}_n\to \varphi$ in $X$ as $n\to\infty$, where $\varphi={\left[\begin{smallmatrix} \varphi_1\\ \varphi_2 \\ \varphi_3 \end{smallmatrix}\right]}$, then 
$$
\begin{cases}
	v_{n}\to -\varphi_{1}\ \mbox{ in }\ \mathcal{H}^{\frac{2}{3}} \\
		w_{n}\to -\varphi_{2} \mbox{ in }\ \mathcal{H}^{\frac{1}{3}}\\
	A u_{n} \to \varphi_3\ \ \mbox{ in }\ \mathcal{H}\ \mbox{as}\ n\to\infty
\end{cases}
$$
and consequently, $v=-\varphi_{1}\in \mathcal{H}^{\frac{2}{3}}$, $w=-\varphi_{2}\in \mathcal{H}^{\frac{1}{3}}$. Finally, using the fact that $A$ is a closed operator, we have $u\in \mathcal{D}(A)$ and $Au=\varphi_3$; that is, ${\bf u}\in \mathcal{D}(\mathbb{A})$ and $\mathbb{A} {\bf u}=\varphi$.

Secondly, Since the inclusions $\mathcal{H}^{\alpha}\subset \mathcal{H}^{\beta}$ are dense for $\alpha>\beta\geq 0$, then we conclude that  $\mathcal{D}(\mathbb{A})$ is dense in $X$.

Finally, to prove \eqref{spectrum} it suffices to prove that 
the resolvent set of $\mathbb{A}$ is given by
\begin{equation*}
	\rho(\mathbb{A})=\{\lambda \in \mathbb{C}: \lambda^3 \in \rho(A)\}.
\end{equation*}

Suppose that $\lambda\in\mathbb{C}$ is such that $\lambda^3\in\rho(A)$. We claim that $\lambda\in\rho(\mathbb{A})$. Indeed, since $-\mathbb{A}$ is  closed operator, we only need to show that 
$$\lambda I-\mathbb{A}: \mathcal{D}(\mathbb{A})\subset X \to X$$ 
is bijective. We have
\begin{equation}\label{dasd4}
	\lambda I-\mathbb{A}= \begin{bmatrix}
		\lambda I & I & 0 \\
		0 & \lambda I & I\\ 
		-A & 0 & \lambda I
	\end{bmatrix},
\end{equation}

For injectivity consider ${\bf u}=\left[\begin{smallmatrix}
	u\\
	v\\
	w
\end{smallmatrix}\right] \in D(\mathbb{A})$ and $(\lambda I-\mathbb{A}){\bf u}=0$, then

\begin{equation}\label{sistinj}
	\begin{cases}
		\lambda u+v= 0,\\
		\lambda v+w= 0,\\
		-Au + \lambda w = 0.
	\end{cases}
\end{equation}
From \eqref{sistinj} we have
\begin{equation}
	(\lambda^3 I-A)u=0.
\end{equation}  
Since $\lambda^3\in \rho(A)$, we conclude that $u=0$ and consequently $\textbf{u}=0$. 

For surjectivity given    
$\varphi=\left[\begin{smallmatrix}
	\varphi_1\\
	\varphi_2\\
	\varphi_3
\end{smallmatrix}\right]\in X$  we take ${\bf u}=\left[\begin{smallmatrix}
	u\\
	v\\
w
\end{smallmatrix}\right]$ such that $(\lambda I-\mathbb{A}){\bf u}=\varphi$, that is
\begin{equation*}
	\begin{cases}
		\lambda u+v= \varphi_1,\\
		\lambda v+w= \varphi_2,\\
		-Au + \lambda w = \varphi_3.
	\end{cases}
\end{equation*}
Since $\lambda^3\in \rho(A)$, by straightforward calculus, we obtain 
\begin{equation*}
	\begin{cases}
		u=\lambda^2 (\lambda^{3}I-A)^{-1} \varphi_1-\lambda(\lambda^{3}I-A)^{-1}  \varphi_2+(\lambda^{3}I-A)^{-1} \varphi_3,\\
	 v= -A(\lambda^{3}I -A)^{-1} \varphi_1+\lambda^2(\lambda^{3}I-A)^{-1}  \varphi_2-\lambda (\lambda^{3}I-A)^{-1} \varphi_3,\\
	 w = \lambda A(\lambda^{3}I-A)^{-1} \varphi_1-A(\lambda^{3}I-A)^{-1}  \varphi_2+\lambda^2(\lambda^{3}I -A)^{-1} \varphi_3.
	\end{cases}
\end{equation*}
 Moreover, $u\in \mathcal{D}(A)$, $v\in \mathcal{D}(A^{\frac{2}{3}})$ and $w\in \mathcal{D}(A^{\frac{1}{3}})$. Then we have ${\bf u}\in \mathcal{D}(\mathbb{A})$ and 
\[
(\lambda I-\mathbb{A}){\bf u}=\varphi. 
\]
Now suppose that $\lambda\in \rho(\mathbb{A})$. If $u\in D(A)$ is such that $(\lambda^3 I-A)u=0$, taking ${\bf u}=\left[\begin{smallmatrix}
	u\\
	 v\\
	w
\end{smallmatrix}\right]\in \mathcal{D}(\mathbb{A})$ we have
\begin{equation}
	(\lambda I-\mathbb{A}){\bf u} = 0.
\end{equation}
Since $\lambda\in \rho(\mathbb{A})$, it follows that ${\bf u}=0$ and consequently $u=0$, which proves the injectivity of $\lambda^3 I-A$. Given $f\in \mathcal{H}$, consider $\varphi=\left[\begin{smallmatrix}
	0\\
	0\\
	f
\end{smallmatrix}\right] \in X$. By the surjectivity of $\lambda I - \mathbb{A}$ there exists ${\bf u}=\left[\begin{smallmatrix}
	u\\
	v\\
	w
\end{smallmatrix}\right] \in \mathcal{D}(\mathbb{A})$ such that
\begin{equation}
	(\lambda I - \mathbb{A}){\bf u}=\varphi
\end{equation}
which gives 
\[
(\lambda^3 I-A)u=f
\]
and the proof is complete. 
\end{proof}
\begin{remark}\label{rem:sp} Let $A$ be m-$\omega$-accretive operator, $0 \leq \omega\leq  \pi/2$. In the following some interesting consequences of Lemma \ref{SPLO} and its proof,
	\begin{enumerate}
		\item The resolvent set of $\mathbb{A}$ is given by
		\begin{equation}\label{resolvlambda} 
			\rho(\mathbb{A})=\{\lambda \in \mathbb{C}: \lambda^3 \in \rho(A)\},
		\end{equation} 
		and
		\begin{equation*}\label{}
			(\lambda I-\mathbb{A})^{-1}=	\begin{bmatrix}
				\lambda^2 (\lambda^{3}I-A)^{-1}&-\lambda(\lambda^{3}I-A)^{-1} &(\lambda^{3}I-A)^{-1} \\
				-A(\lambda^{3}I-A)^{-1} &\lambda^2(\lambda^{3}I-A)^{-1} &-\lambda (\lambda^{3}I-A)^{-1} \\
				\lambda A(\lambda^{3}I -A)^{-1} &-A(\lambda^{3}I-A)^{-1}  &\lambda^2(\lambda^{3}I -A)^{-1}
			\end{bmatrix} 
		\end{equation*}
		for all $\lambda\in\rho(\mathbb{A})$. 
		\item By \eqref{resolvlambda}, we have $$(-\infty , 0)\subset \rho(\mathbb{A}).$$
		\item 
		We have, for every $r\geq 0$,
		$$ re^{i\theta} \in \sigma(A) \Longleftrightarrow  r^{1/3}e^{i\frac{\theta}{3}}, r^{1/3}e^{i\frac{\theta+2\pi}{3}}, r^{1/3}e^{i\frac{\theta-2\pi}{3}} \in  \sigma(\mathbb{A}).$$
		Thus, we can see that the  spectrum of $\mathbb{A}$ is a reunion of possibly three disjointed  closed parts $\varLambda_1$, $\varLambda_2$ and $\varLambda_3$, with
		\begin{equation*}
	\varLambda_1 	\subset \overline{\mathcal{S}(\omega/3)}, \qquad 
	\varLambda_2 \subset e^{i\frac{2\pi}{3}}\overline{\mathcal{S}(\omega/3)},\qquad 	
\text{ and }\qquad \varLambda_3 \subset e^{-i\frac{2\pi}{3}}\overline{\mathcal{S}(\omega/3)}.
\end{equation*}
Consequently,
$$ \mathcal{S}(\pi-\omega/3) \cup\left\lbrace \lambda \in \mathbb{C}: \frac{\pi}{2}-\omega/3<\arg(\lambda) <\frac{\pi}{2}\right\rbrace  \cup \left\lbrace \lambda \in \mathbb{C}: -\frac{\pi}{2}<\arg(\lambda) <-\frac{\pi}{2}-\omega/3\right\rbrace\subset \rho(\mathbb{A} ).$$
See the figure below.
\item Clearly, $$0\in \rho(A) \Longleftrightarrow  0\in \rho(\mathbb{A}),$$
in this case, $\varLambda_1 \cap \varLambda_2\cap \varLambda_3=\emptyset$ and  $\sigma(\mathbb{A})=\varLambda_1 \cup \varLambda_2\cup \varLambda_3.$
\item If $A$ a self adjoint operator with a compact resolvent, then $-\mathbb{A}$ cannot be the infinitesimal generator of a strongly continuous semigroup on $X$. Indeed,   if $-\mathbb{A}$ generates a strongly continuous semigroup, it follows from  Pazy \cite[Theorem 1.2.2]{Pazy1983} that there exist  constants $\omega\geq 0$  such that
\begin{equation}\label{halfplan}
	\{\lambda \in \mathbb{C}: {\rm Re}\lambda > \omega\} \subset \rho(-\mathbb{A}). 
\end{equation}
But, we have
\[
\sigma_p(-\mathbb{A})=\{\lambda \in \mathbb{C}: \lambda^3 \in \sigma_p(-A)\},
\]
where $\sigma_p(-\mathbb{A})$ and $\sigma_p(-A)$ denote the point spectrum set of $-\mathbb{A}$ and $-A$, respectively.
Since $\sigma_p(-A)=\{-\mu_j: j\in \mathbb{N}\}$ with $\mu_j\in \sigma_p(A)$ for each $j\in \mathbb{N}$ and $\mu_j\to \infty$ as $j\to \infty$, we conclude that
\[
\sigma_p(-\mathbb{A})\cap \{\lambda \in \mathbb{C}: {\rm Re} \lambda>\omega\}\neq \emptyset
\]
See figure \ref{fig01}. This contradicts the equation \eqref{halfplan} and therefore $-\mathbb{A}$ can not be the infinitesimal generator of a strongly continuous semigroup on $X$.
	\end{enumerate}

\end{remark}
\begin{figure}[H]
	\begin{center}
		\begin{tikzpicture}
			\draw[-stealth', densely dotted] (-3.15,0) -- (3.15,0) node[below] {\ \ \ \ \ \ \ $\scriptstyle {\rm Re} (\lambda)$};
			\draw[-stealth', densely dotted ] (0,-3.15) -- (0,3.15) node[left] {\color{black}$\scriptstyle{\rm Im} (\lambda)$};
			\fill[blue!20!] (0,0) -- (3,0) arc (0:25:1cm)-- cycle;
			\fill[blue!20!] (0,0) -- (3,0) arc (0:-25:1cm)-- cycle;
			\fill[blue!20!] (0,0) -- (-2,3) arc (0:25:1cm)-- cycle;
			\fill[blue!20!] (0,0) -- (-2,3) arc (0:-43:1cm)-- cycle;
			\fill[blue!20!] (0,0) -- (-2,-3) arc (0:-25:1cm)-- cycle;
			\fill[blue!20!] (0,0) -- (-2,-3) arc (0:43:1cm)-- cycle;
			\node at (3.5,2) {{\tiny\color{blue} Sector $\overline{\mathcal{S}(\omega/3)}$ containing  $\varLambda_1$}};
			\node at (-3.5,1) {{\tiny\color{blue} Sector  $e^{i\frac{2\pi}{3}}\overline{\mathcal{S}(\omega/3)}$ containing  $\varLambda_2$}};
			\node at (-3.5,-1) {{\tiny\color{blue} Sector $e^{-i\frac{2\pi}{3}}\overline{\mathcal{S}(\omega/3)}$ containing  $\varLambda_3$}};
			
			\draw[color=blue,->, densely dotted] (-3,1.3) -- (-1.3,1.6);
			\draw[color=blue,->, densely dotted] (-3,-1.3) -- (-1.3,-1.6);
			\draw[color=blue,->, densely dotted] (3,1.8) -- (1.2, 0);
			\draw[color=blue, line width=1pt] (3.15,0.5) -- (0,0);
			\draw[color=blue, line width=1pt] (3.15,-0.5) -- (0,0);
			\draw[color=blue, line width=1pt] (-2, 3.3) -- (0,0);
			\draw[color=blue, line width=1pt] (-2.3, 2.3) -- (0,0);
			\draw[color=blue, line width=1pt] (-2, -3.3) -- (0,0);
			\draw[color=blue, line width=1pt] (-2.3, -2.3) -- (0,0);
			\node at (0.5,-0.2) {\color{blue}{\tiny $\omega$}};
		\end{tikzpicture}
	\end{center}
	\caption{Location of the spectrum of $\mathbb{A}$}\label{fig01}
\end{figure}
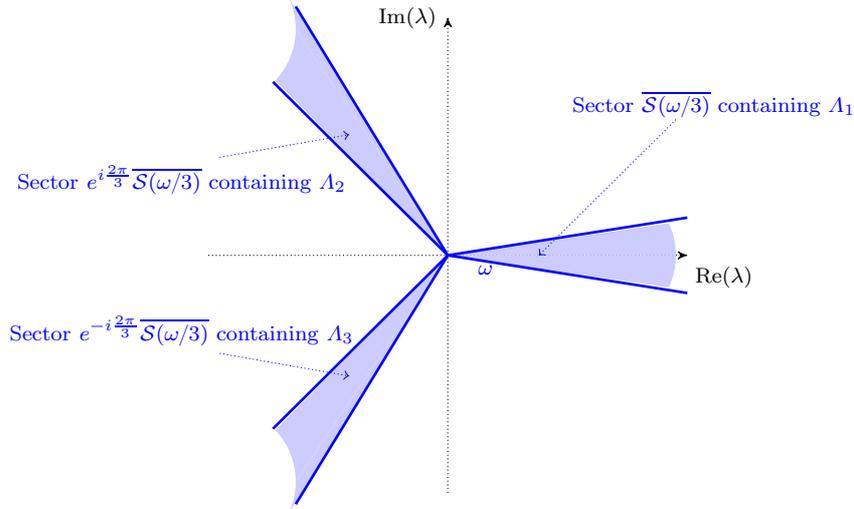

\subsection{The fractional powers of the linearized  operator}
Let $\mathbb{A} : \mathcal{D}(\mathbb{A})\subset X \to X$ be the unbounded linear operator  defined in \eqref{LO}-\eqref{DLO}. In this section we prove that $\mathbb{A}$ is an operator  of non-negative type in the sens of  \cite[ Definition 1.1.1]{MartinezS2001}. These are the operators that one can define the fractional power, for more details see Pazy  \cite[Section 2.2.6]{Pazy1983} and Amann  \cite[Section 3.4.6]{Amann1995}.

\begin{lemma}\label{PTO} Assume that $A$ be m-$\omega$-accretive operator, $0 \leq \omega\leq  \pi/2$. Then, $\mathbb{A}$  is closed, densely defined, $\left(  -\infty , 0\right) \subset \rho(\mathbb{A})$ and there exists a positive constant $M$, such that
	\begin{equation}\label{PTKR}
		\|(\lambda I+\mathbb{A})^{-1}\|_{\mathcal{B}(X)}\leqslant \frac{M}{\lambda}, \ \mbox{for all}\ \lambda> 0.
	\end{equation}
\end{lemma}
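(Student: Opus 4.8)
The plan is to dispatch the three assertions in turn, the only substantive one being the resolvent bound \eqref{PTKR}. Closedness and dense definedness of $\mathbb{A}$ are already established in Lemma \ref{SPLO}, so I would simply invoke them. The inclusion $(-\infty,0)\subset\rho(\mathbb{A})$ follows at once from the characterization \eqref{resolvlambda}: for $\lambda<0$ one has $\lambda^3<0$, and since $A$ is m-$\omega$-accretive with $0\le\omega\le\pi/2$ its spectrum lies in $\overline{\mathcal{S}(\omega)}\subset\overline{\mathbb{C}_+}$, whence $\lambda^3\in(-\infty,0)\subset\rho(A)$ and therefore $\lambda\in\rho(\mathbb{A})$.

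For the estimate, I would first make the explicit resolvent of Remark \ref{rem:sp} available for positive $\lambda$ by substituting $\mu=-\lambda$. Writing $R_\lambda:=(\lambda^{3}I+A)^{-1}$, so that $(\mu^{3}I-A)^{-1}=-R_\lambda$ at $\mu=-\lambda$, and using $(\lambda I+\mathbb{A})^{-1}=-\big[(\mu I-\mathbb{A})^{-1}\big]_{\mu=-\lambda}$, one obtains
\[
(\lambda I+\mathbb{A})^{-1}=\begin{bmatrix}
\lambda^{2}R_\lambda & \lambda R_\lambda & R_\lambda\\
-AR_\lambda & \lambda^{2}R_\lambda & \lambda R_\lambda\\
-\lambda AR_\lambda & -AR_\lambda & \lambda^{2}R_\lambda
\end{bmatrix},\qquad \lambda>0.
\]
The two scalar ingredients I then need are the plain bound $\|R_\lambda\|_{\mathcal{B}(\mathcal{H})}\le\lambda^{-3}$ and a moment-type bound $\|A^{\gamma}R_\lambda\|_{\mathcal{B}(\mathcal{H})}\le C\,\lambda^{3\gamma-3}$ for $0\le\gamma\le1$. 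For the first, observe that $-\lambda^{3}$ lies on the negative real axis and that, since $\overline{\mathcal{S}(\omega)}\subset\overline{\mathbb{C}_+}$ for $\omega\le\pi/2$, the nearest point of $\overline{\mathcal{S}(\omega)}$ to $-\lambda^{3}$ is the origin, so ${\rm dist}(-\lambda^{3},\mathcal{S}(\omega))=\lambda^{3}$; the resolvent inequality recalled in the Introduction then yields $\|R_\lambda\|\le\lambda^{-3}$. For the second, I would combine this with the identity $AR_\lambda=I-\lambda^{3}R_\lambda$ (giving $\|AR_\lambda\|\le2$) and the moment inequality $\|A^{\gamma}x\|\le C\|Ax\|^{\gamma}\|x\|^{1-\gamma}$ for non-negative operators; applied to $x=R_\lambda y$ this gives $\|A^{\gamma}R_\lambda\|\le C\|AR_\lambda\|^{\gamma}\|R_\lambda\|^{1-\gamma}\le C'\lambda^{3\gamma-3}$.

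Finally I would bound the $X$-norm of $(u,v,w)=(\lambda I+\mathbb{A})^{-1}\varphi$ entry by entry, using $\|\cdot\|_{\mathcal{H}^{\alpha}}=\|A^{\alpha}\cdot\|_{\mathcal{H}}$ and the fact that $A^{\alpha}$ commutes with $R_\lambda$. The crucial bookkeeping is to split each fractional power across the resolvent so that exactly enough of it lands on $\varphi_j$ to convert its norm into the $\mathcal{H}^{\beta_j}$-norm of the corresponding component space, the remainder combining with $R_\lambda$ and the polynomial factor $\lambda^{k}$; for instance, for the $(1,2)$-entry one writes $A^{2/3}(\lambda R_\lambda)\varphi_2=\lambda\,(A^{1/3}R_\lambda)(A^{1/3}\varphi_2)$ and estimates it by $\lambda\cdot C\lambda^{-2}\|\varphi_2\|_{\mathcal{H}^{1/3}}=C\lambda^{-1}\|\varphi_2\|_{\mathcal{H}^{1/3}}$. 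Carrying this out for all nine entries shows that $\|u\|_{\mathcal{H}^{2/3}}$, $\|v\|_{\mathcal{H}^{1/3}}$ and $\|w\|_{\mathcal{H}}$ are each $\le C\lambda^{-1}\|\varphi\|_X$, which yields \eqref{PTKR}.

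I expect the only delicate point to be precisely this last bookkeeping: verifying that every one of the nine entries, after the fractional powers are correctly distributed, produces the \emph{same} uniform exponent $\lambda^{-1}$. Once the moment bound $\|A^{\gamma}R_\lambda\|\le C\lambda^{3\gamma-3}$ is in hand, each individual estimate is routine, and the conclusion that $\mathbb{A}$ is of non-negative type follows directly.
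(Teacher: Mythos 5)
Your proposal is correct and follows essentially the same route as the paper: it invokes Lemma \ref{SPLO} for closedness, density and $(-\infty,0)\subset\rho(\mathbb{A})$, uses the same explicit matrix for $(\lambda I+\mathbb{A})^{-1}$ with $R_\lambda=(\lambda^{3}I+A)^{-1}$, and combines $\|R_\lambda\|\le\lambda^{-3}$, $\|AR_\lambda\|\le 2$ (via $AR_\lambda=I-\lambda^{3}R_\lambda$) and the moment inequality $\|A^{\gamma}R_\lambda\|\le C\lambda^{3\gamma-3}$ to bound each entry, distributing the fractional powers onto $\varphi_j$ exactly as in the paper's estimates of $\|u\|_{\mathcal{H}^{2/3}}$, $\|v\|_{\mathcal{H}^{1/3}}$, $\|w\|_{\mathcal{H}}$. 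The entry-by-entry bookkeeping you flag does indeed produce the uniform factor $\lambda^{-1}$ in all nine cases, so no gap remains.
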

\begin{proof}
By Lemma \ref{SPLO}, the operator $\mathbb{A}$ is closed and densely defined and $\left(-\infty, 0 \right) \subset \rho(\mathbb{A})$. 

For   
$\varphi=\left[\begin{smallmatrix}
	\varphi_1\\
	\varphi_2\\
	\varphi_3
\end{smallmatrix}\right]$ , ${\bf u}=\left[\begin{smallmatrix}
	u\\
	v\\
	w
\end{smallmatrix}\right]$ in $X$ and any $\lambda>0$ we have, ${\bf u}=(\lambda I+\mathbb{A})^{-1}\varphi$ if and only if
\begin{equation*}
	\begin{cases}
		u=\lambda^2 (\lambda^{3}I+A)^{-1} \varphi_1+\lambda(\lambda^{3}I+A)^{-1}  \varphi_2+(\lambda^{3}I+A)^{-1} \varphi_3,\\
		v= -A(\lambda^{3}I +A)^{-1} \varphi_1+\lambda^2(\lambda^{3}I+A)^{-1}  \varphi_2+\lambda (\lambda^{3}I +A)^{-1} \varphi_3,\\
		w = -\lambda A(\lambda^{3}I+A)^{-1} \varphi_1-A(\lambda^{3}I+A)^{-1}  \varphi_2+\lambda^2(\lambda^{3}I+A)^{-1} \varphi_3.
	\end{cases}
\end{equation*}
 In order to verify the equation \eqref{PTKR} for $\mathbb{A}$,  it is sufficient to show that for $\|\varphi\|_X\leq 1$ there exists a constant $M_{\mathbb{A}}>0$ such that
\begin{equation}
	\|u\|_{\mathcal{H}^{\frac{2}{3}}}+\|v\|_{\mathcal{H}^{\frac{1}{3}}}+\|w\|_{\mathcal{H}} \leqslant \frac{M_{\mathbb{A}}}{\lambda}.
\end{equation}

We first observe that
\begin{equation}\nonumber
	A(\lambda^3 I + A)^{-1} = I-\lambda^3(\lambda^3 I + A)^{-1} .
\end{equation}
This and the fact that $A$ is m-accretive and $\lambda^3 \in  \rho(A)$,
\begin{equation*}
		\|A(\lambda^3 I + A)^{-1}\|_{\mathcal{B}(\mathcal{H})}\leqslant 1 + \|\lambda^3(\lambda^3 I + A)^{-1}\|_{\mathcal{B}(\mathcal{H})}\leqslant 2.\\
\end{equation*}
Now, for $x\in \mathcal{H}$, from the inequality of \cite[Theorem 6.10]{Pazy1983} we have 
\[
\begin{split}
	\|A^{\alpha}(\lambda^3 I + A)^{-1}x\|_\mathcal{H} &\leqslant C \|(\lambda^3 I + A)^{-1}x\|_\mathcal{H}^{1-\alpha}\|A(\lambda^3 I + A)^{-1}x\|_\mathcal{H}^{\alpha}\\
	&\leqslant \dfrac{2^{\alpha}C}{\lambda^{3(1-\alpha)}}\|x\|_\mathcal{H}
\end{split}
\]
for some $C> 0$. Thus,
\begin{align*}
\|u\|_{\mathcal{H}^{\frac{2}{3}}}&\leq \left\| 	\lambda^2 (\lambda^{3}I+A)^{-1}  \varphi_1\right\|_{\mathcal{H}^{\frac{2}{3}}}+\left\| \lambda(\lambda^{3}I+A)^{-1}  \varphi_2\right\|_{\mathcal{H}^{\frac{2}{3}}} + \left\| (\lambda^{3}I+A)^{-1} \varphi_3 \right\|_{\mathcal{H}^{\frac{2}{3}}}\\
&\leq \dfrac{1}{\lambda} \left\|  \varphi_1\right\|_{\mathcal{H}^{\frac{2}{3}}}+  \dfrac{2^{1/3}C}{\lambda}\left\| \varphi_2\right\|_{\mathcal{H}^{\frac{1}{3}}} +\dfrac{2^{2/3}C}{\lambda} \left\|  \varphi_3 \right\|_{\mathcal{H}}\\
&\leq \max\left\lbrace 1, 2^{1/3}C, 2^{2/3}C\right\rbrace \dfrac{1}{\lambda},  
\end{align*}
\begin{align*}
	\|v\|_{\mathcal{H}^{\frac{1}{3}}}&\leq \left\| 	A (\lambda^{3}I+A)^{-1}  \varphi_1\right\|_{\mathcal{H}^{\frac{1}{3}}}+\left\| \lambda^2(\lambda^{3}I+A)^{-1}  \varphi_2\right\|_{\mathcal{H}^{\frac{1}{3}}} + \lambda\left\| (\lambda^{3}I+A)^{-1} \varphi_3 \right\|_{\mathcal{H}^{\frac{1}{3}}}\\
	&\leq  \dfrac{2^{2/3}C}{\lambda}\left\|  \varphi_1\right\|_{\mathcal{H}^{\frac{2}{3}}}+  \dfrac{1}{\lambda}\left\| \varphi_2\right\|_{\mathcal{H}^{\frac{1}{3}}} +\dfrac{2^{1/3}C}{\lambda} \left\|  \varphi_3 \right\|_{\mathcal{H}}\\
	&\leq \max\left\lbrace 1, 2^{1/3}C, 2^{2/3}C\right\rbrace \dfrac{1}{\lambda}  
\end{align*}
and
\begin{align*}
	\|w\|_{\mathcal{H}}&\leq \left\| 	\lambda A (\lambda^{3}I+A)^{-1}  \varphi_1\right\|_{\mathcal{H}}+\left\| A(\lambda^{3}I+A)^{-1}  \varphi_2\right\|_{\mathcal{H}} + \left\|\lambda^2 (\lambda^{3}I+A)^{-1} \varphi_3 \right\|_{\mathcal{H}}\\
	&\leq  \dfrac{2^{1/3}C}{\lambda}\left\|  \varphi_1\right\|_{\mathcal{H}^{\frac{2}{3}}}+  \dfrac{2^{2/3}C}{\lambda}\left\| \varphi_2\right\|_{\mathcal{H}^{\frac{1}{3}}} +\dfrac{1}{\lambda} \left\|  \varphi_3 \right\|_{\mathcal{H}}\\
	&\leq \max\left\lbrace 1, 2^{1/3}C, 2^{2/3}C\right\rbrace \dfrac{1}{\lambda}  
\end{align*}
Hence, 
\begin{equation*}
	\|u\|_{\mathcal{H}^{\frac{2}{3}}}+\|v\|_{\mathcal{H}^{\frac{1}{3}}}+\|w\|_{\mathcal{H}} \leqslant \frac{M_{\mathbb{A}}}{\lambda},
\end{equation*}
with $M_{\mathbb{A}}=3\max\left\lbrace 1, 2^{1/3}C, 2^{2/3}C\right\rbrace$.
\end{proof}

According to Lemma \ref{PTO} and the terminology of \cite[ Definition 1.1.1]{MartinezS2001},  the operator $\mathbb{A}$ is a non-negative type, see also \cite[P. 686]{MartinezS1997}. As a particular case of this definition we see that an operator is m-accretive  if and only  it is  non-negative type  and its non-negative constant is equal to $1$, see \cite[Theorem 1.3.3]{MartinezS2001}. 

   Now, if we assume fuhrer, $0\in \rho(A)$, then there exists $0<\varepsilon \leq 1$ such that 
$${\rm Re} \left\langle Ax, x \right\rangle_\mathcal{H} \geq \varepsilon \left\|x \right\|^2_\mathcal{H}\quad \text{ for all } x\in \mathcal{D}(A). $$
Moreover, for every $\mu\geq 0$, we have
$$(\mu +\varepsilon)  \left\|x \right\|^2_\mathcal{H}\leq {\rm Re} \left\langle (\mu + A) x, x \right\rangle_\mathcal{H} \geq   \left\| (\mu + A) x \right\|_\mathcal{H} \left\| x\right\|_\mathcal{H}  \quad \text{ for all } x\in \mathcal{D}(A).$$
implies,
$$\left\|  (\mu + A)^{-1} \right\|_{\mathcal{B}(\mathcal{H})}\leq \dfrac{1}{\mu +\varepsilon}.$$
Thus, 
$$\left\|  (\mu + A)^{-1} \right\|_{\mathcal{B}(\mathcal{H})}\leq \dfrac{1}{\mu +1},  \text{ for all } \mu\geq 0.$$
Using this last inequality, Lemma \ref{PTO}, reads as follows
\begin{corollary}\label{PTOT} Assume that $A$ be m-$\omega$-accretive operator, $0 \leq \omega\leq  \pi/2$ and $0\in \rho(A)$. Then, $\mathbb{A}$  is closed, densely defined, $\left(  -\infty , 0 \right]  \subset \rho(\mathbb{A})$ and there exists a positive constant $K$, such that
	\begin{equation}\label{PTMR}
		\|(\lambda I+\mathbb{A})^{-1}\|_{\mathcal{B}(X)}\leqslant \frac{K}{\lambda+1}, \ \mbox{for all}\ \lambda\geq 0.
	\end{equation}
\end{corollary}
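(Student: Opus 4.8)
The qualitative assertions require no new work. By Lemma \ref{SPLO} and Remark \ref{rem:sp} the operator $\mathbb{A}$ is closed and densely defined and $\rho(\mathbb{A})=\{\mu\in\mathbb{C}:\mu^{3}\in\rho(A)\}$; since $A$ is accretive one always has $(-\infty,0)\subset\rho(A)$, and $0\in\rho(A)$ by hypothesis, so $(-\infty,0]\subset\rho(A)$ and hence $(-\infty,0]\subset\rho(\mathbb{A})$. In particular $\lambda I+\mathbb{A}$ is boundedly invertible for every $\lambda\geq0$. The only genuine task is the quantitative bound \eqref{PTMR}, and the plan is to rerun the estimates in the proof of Lemma \ref{PTO} with the sharpened scalar resolvent bound $\|(\lambda^{3}I+A)^{-1}\|_{\mathcal{B}(\mathcal{H})}\leq\frac{1}{\lambda^{3}+1}$ (take $\mu=\lambda^{3}$ in the estimate $\|(\mu+A)^{-1}\|\leq\frac{1}{\mu+1}$ derived just above) in place of the bound $\frac{1}{\lambda^{3}}$ used there. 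Crucially this sharpened bound holds for all $\lambda\geq0$, including $\lambda=0$, which is exactly what the hypothesis $0\in\rho(A)$ provides through the coercivity constant $\varepsilon$.

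First I would reuse verbatim the explicit formula for ${\bf u}=(\lambda I+\mathbb{A})^{-1}\varphi$ recorded in the proof of Lemma \ref{PTO}. The bound $\|A(\lambda^{3}I+A)^{-1}\|_{\mathcal{B}(\mathcal{H})}\leq2$ persists unchanged, since it uses only $m$-accretivity. Feeding it together with the improved resolvent bound into the moment inequality \cite[Theorem 6.10]{Pazy1983} upgrades the fractional estimate to
\begin{equation*}
\|A^{\alpha}(\lambda^{3}I+A)^{-1}\|_{\mathcal{B}(\mathcal{H})}\leq\frac{2^{\alpha}C}{(\lambda^{3}+1)^{1-\alpha}},\qquad 0\leq\alpha\leq1,\ \lambda\geq0,
\end{equation*}
for some $C>0$. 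Commuting the fractional powers through the resolvent exactly as in Lemma \ref{PTO}, each summand of $u$, $v$, $w$, measured in the appropriate factor $\mathcal{H}^{2/3}$, $\mathcal{H}^{1/3}$, $\mathcal{H}$ of $X$, reduces to a scalar factor of the shape $\frac{\lambda^{a}}{(\lambda^{3}+1)^{b}}$ times $\|\varphi_{i}\|$ in the correct fractional norm, and in every one of the nine entries the exponents satisfy the identity $a-3b=-1$.

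The decisive elementary step is that whenever $a\geq0$ and $a-3b=-1$ one has
\begin{equation*}
\frac{\lambda^{a}}{(\lambda^{3}+1)^{b}}\leq\frac{4^{b}}{\lambda+1},\qquad \lambda\geq0,
\end{equation*}
which is precisely what converts the $\frac{1}{\lambda}$ decay of Lemma \ref{PTO} into the $\frac{1}{\lambda+1}$ decay valid down to $\lambda=0$. I would prove it from the factorization $\lambda^{3}+1=(\lambda+1)(\lambda^{2}-\lambda+1)$, or equivalently from the bound $\lambda^{3}+1\geq\frac14(\lambda+1)^{3}$, combined with $\left(\frac{\lambda}{\lambda+1}\right)^{a}\leq1$ and the exponent relation $3b-1=a$. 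Summing the three resulting estimates for $\|u\|_{\mathcal{H}^{2/3}}$, $\|v\|_{\mathcal{H}^{1/3}}$ and $\|w\|_{\mathcal{H}}$ then yields \eqref{PTMR}, with $K$ equal to $3$ times the maximum of the finitely many constants $4^{b}2^{\alpha}C$ that occur.

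The main obstacle is not conceptual but lies entirely in the uniformity near $\lambda=0$: the bound $\frac{M}{\lambda}$ of Lemma \ref{PTO} degenerates there, and it is exactly the hypothesis $0\in\rho(A)$ --- entering through $\varepsilon$ and the resulting $\lambda^{3}+1$ in the denominators --- that keeps every term finite and correctly scaled as $\lambda\downarrow0$. Beyond this the argument is pure bookkeeping, namely verifying $a-3b=-1$ in each entry and invoking the displayed elementary inequality. (Alternatively, one may sidestep recomputation by noting that Lemma \ref{PTO} already gives $\frac{M}{\lambda}\leq\frac{2M}{\lambda+1}$ for $\lambda\geq1$, and that the resolvent, being continuous on the compact set $[-1,0]\subset\rho(\mathbb{A})$, is bounded there by some $L$, whence $\|(\lambda I+\mathbb{A})^{-1}\|_{\mathcal{B}(X)}\leq L\leq\frac{2L}{\lambda+1}$ for $\lambda\in[0,1]$.)
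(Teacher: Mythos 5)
Your proposal is correct and follows essentially the same route as the paper: the paper obtains the corollary by noting that $0\in\rho(A)$ upgrades the scalar resolvent bound to $\|(\mu I+A)^{-1}\|_{\mathcal{B}(\mathcal{H})}\lesssim\frac{1}{\mu+1}$ for $\mu\geq 0$ and then rereading the estimates of Lemma \ref{PTO} with this bound, which is exactly your main argument. You merely make explicit the bookkeeping the paper leaves implicit (the exponent identity $a-3b=-1$ and the inequality $\lambda^{3}+1\geq\frac{1}{4}(\lambda+1)^{3}$), and your parenthetical compactness shortcut is a valid alternative way to handle $\lambda\in[0,1]$.
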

The operator $\mathbb{A}$ is called of positive type; \cite[ Definition 1.1.2]{MartinezS2001} and \cite[Proposition  1.1.1]{MartinezS2001}. Also  a boundedly invertible m-accretive operator is of positive type.

It is well known that,  the generator of a strongly continuous semigroup which decays exponentially   is of positive type, see Amann \cite[Page 156]{Amann1995}.  However, it possible that an  operator  of positive type do not generate a semigroup of any kind, see Remark \ref{rem:sp}-(5).

\begin{theorem}\label{thm:FPLO}Assume that $A$ be m-$\omega$-accretive operator, $0 \leq \omega\leq  \pi/2$ and $0\in \rho(A)$. Then the fractional powers $\mathbb{A}^\alpha$ of  $\mathbb{A}$ can be defined for $0\leqslant\alpha\leqslant1$ by 
	$\mathbb{A}^{\alpha}: D(\mathbb{A}^{\alpha})\subset X \to X$, 
	\begin{equation*}\label{DFPLO}
		\mathcal{D}(\mathbb{A}^{\alpha})=\mathcal{H}^{\frac{\alpha+2}{3}}\times \mathcal{H}^{\frac{\alpha+1}{3}}\times \mathcal{H}^{\frac{\alpha}{3}},
	\end{equation*}
	and
\begin{equation}\label{FPLO}
	\mathbb{A}^{\alpha}=\dfrac{1}{3} \begin{bmatrix}
		\varUpsilon_0^\alpha	A^{\frac{\alpha}{3}} &-\varUpsilon_2^\alpha A^{\frac{\alpha-1}{3}} & \varUpsilon_1^\alpha A^{\frac{\alpha-2}{3}}\\
		& & \\
		-\varUpsilon_1^\alpha A^{\frac{\alpha+1}{3}}&\varUpsilon_0^\alpha A^{\frac{\alpha}{3}} & -\varUpsilon_2^\alpha A^{\frac{\alpha-1}{3}}\\
		& & \\
		\varUpsilon_2^\alpha A^{\frac{\alpha+2}{3}} &-\varUpsilon_1^\alpha A^{\frac{\alpha+1}{3}} & \varUpsilon_0^\alpha A^{\frac{\alpha}{3}}
	\end{bmatrix}
	\end{equation}
where,
\begin{equation}\label{Varupsion}\varUpsilon_j^\alpha=2\cos\dfrac{2\pi(\alpha +j)}{3} +1, \quad \text{ for } j=0,1,2.
\end{equation}
\end{theorem}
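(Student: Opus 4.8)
The plan is to apply the Balakrishnan representation \eqref{FRACP} to $\mathbb{A}$ itself. This is legitimate: by Corollary \ref{PTOT} the operator $\mathbb{A}$ is of positive type, so for $0<\alpha<1$ the integral
\[
\mathbb{A}^{\alpha}=\frac{\sin(\pi\alpha)}{\pi}\int_{0}^{\infty}\lambda^{\alpha-1}\,\mathbb{A}(\lambda I+\mathbb{A})^{-1}\,d\lambda
\]
defines $\mathbb{A}^{\alpha}$. First I would insert the explicit resolvent computed in the proof of Lemma \ref{PTO}: writing $R_{\lambda}=(\lambda^{3}I+A)^{-1}$ and multiplying on the left by $\mathbb{A}$ gives
\[
\mathbb{A}(\lambda I+\mathbb{A})^{-1}=\begin{bmatrix} AR_{\lambda} & -\lambda^{2}R_{\lambda} & -\lambda R_{\lambda}\\ \lambda AR_{\lambda} & AR_{\lambda} & -\lambda^{2}R_{\lambda}\\ \lambda^{2}AR_{\lambda} & \lambda AR_{\lambda} & AR_{\lambda}\end{bmatrix}.
\]
Thus every entry of $\mathbb{A}^{\alpha}$ is, up to sign, an operator integral $\frac{\sin(\pi\alpha)}{\pi}\int_{0}^{\infty}\lambda^{\alpha-1+k}A^{\epsilon}(\lambda^{3}I+A)^{-1}\,d\lambda$ with $k\in\{0,1,2\}$ and $\epsilon\in\{0,1\}$. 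The mixing of powers comes precisely from the fact that the three cube roots of each point of $\sigma(A)$ are distributed among the rotated sectors of Figure \ref{fig01}, so that no naive ``diagonal'' cube root is possible and the off-diagonal structure is forced.

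The computational core is the evaluation of these six integrals. I would substitute $\mu=\lambda^{3}$ and use the functional calculus of the sectorial, boundedly invertible operator $A$ (available since $0\in\rho(A)$ and $\omega\le\pi/2<\pi$). After the substitution each integral reduces to a Mellin kernel, and the two operator identities coming from the Balakrishnan calculus (formula \eqref{FRACP} and its companion for negative exponents),
\[
\int_{0}^{\infty}\mu^{s-1}A(\mu+A)^{-1}\,d\mu=\frac{\pi}{\sin(\pi s)}A^{s},\qquad \int_{0}^{\infty}\mu^{s-1}(\mu+A)^{-1}\,d\mu=\frac{\pi}{\sin(\pi s)}A^{s-1}\quad(0<s<1),
\]
turn each entry into a constant times a fractional power $A^{s}$, with $s\in\{\tfrac{\alpha}{3},\tfrac{\alpha\pm1}{3},\tfrac{\alpha\pm2}{3}\}$ — all strictly in $(0,1)$ as soon as $0<\alpha<1$, which is exactly the range where the formula is asserted. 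The constants are the ratios $\frac{\sin(\pi\alpha)}{\sin(\pi(\alpha+j)/3)}$; applying the triple-angle identity $\frac{\sin 3\theta}{\sin\theta}=1+2\cos 2\theta$ with $\theta=\pi(\alpha+j)/3$, together with $\sin(\pi(\alpha+j))=(-1)^{j}\sin(\pi\alpha)$, yields $\frac{\sin(\pi\alpha)}{\sin(\pi(\alpha+j)/3)}=(-1)^{j}\varUpsilon_{j}^{\alpha}$, with $\varUpsilon_{j}^{\alpha}$ as in \eqref{Varupsion}. Combining these $(-1)^{j}$ factors with the signs already present in $\mathbb{A}(\lambda I+\mathbb{A})^{-1}$ reproduces exactly the array \eqref{FPLO}. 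As a consistency check, $\alpha=0$ gives $\varUpsilon_{0}^{0}=3,\ \varUpsilon_{1}^{0}=\varUpsilon_{2}^{0}=0$, hence $\mathbb{A}^{0}=I$, while $\alpha=1$ gives $\varUpsilon_{0}^{1}=\varUpsilon_{1}^{1}=0,\ \varUpsilon_{2}^{1}=3$, hence $\mathbb{A}^{1}=\mathbb{A}$.

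Finally, the domain $\mathcal{D}(\mathbb{A}^{\alpha})=\mathcal{H}^{\frac{\alpha+2}{3}}\times\mathcal{H}^{\frac{\alpha+1}{3}}\times\mathcal{H}^{\frac{\alpha}{3}}$ is read off from \eqref{FPLO}: requiring the three rows to land in $\mathcal{H}^{2/3},\mathcal{H}^{1/3},\mathcal{H}$ respectively forces, after composing with $A^{2/3},A^{1/3},A^{0}$, the conditions $u\in\mathcal{D}(A^{(\alpha+2)/3})$, $v\in\mathcal{D}(A^{(\alpha+1)/3})$ and $w\in\mathcal{D}(A^{\alpha/3})$, and one checks that these three constraints are mutually consistent across all rows. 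I expect the main difficulty to lie not in the algebra but in the rigorous transfer from the scalar to the operator level: the off-diagonal Balakrishnan integrals involve unbounded powers $A^{s}$ (up to $s=(\alpha+2)/3$), so the operator integrals converge only after being evaluated on a dense core of sufficiently smooth vectors — for instance components in $\mathcal{D}(A^{2})$ or in $\bigcap_{n}\mathcal{D}(A^{n})$ — where dominated convergence and the functional calculus apply verbatim. I would therefore establish \eqref{FPLO} first on such a core and then extend it by closedness, both sides being closed operators agreeing there; the only point requiring extra care is the boundary case $\omega=\pi/2$, for which it suffices that $A$ remains sectorial of angle $<\pi$, which still holds.
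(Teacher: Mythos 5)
Your proposal is correct and follows essentially the same route as the paper: apply the Balakrishnan integral to $\mathbb{A}$, insert the explicit matrix $\mathbb{A}(\lambda I+\mathbb{A})^{-1}$ with entries $\pm\lambda^{k}A^{\epsilon}(\lambda^{3}I+A)^{-1}$, substitute $\mu=\lambda^{3}$, and reduce each entry to a fractional power of $A$ via the triple-angle identity $\sin 3\theta=\sin\theta\,(2\cos 2\theta+1)$, which produces exactly the coefficients $\varUpsilon_{j}^{\alpha}$. Your additional remarks (the companion integral for $(\mu+A)^{-1}$, the checks at $\alpha=0,1$, and the core-plus-closure argument for the domain) are consistent with, and slightly more careful than, the paper's own computation.
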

\begin{proof} By Lemma \ref{PTO}, the  operator $\mathbb{A}$ is a non-negative type, hence we can define the fractional powers $\mathbb{A}^\alpha$ for $0<\alpha <1$ by 
	$\mathbb{A}^{\alpha}: D(\mathbb{A}^{\alpha})\subset X \to X$, and
	$$\mathbb{A}^{\alpha}=\dfrac{\sin(\alpha \pi)}{\pi}\int_0^\infty   \lambda^{\alpha-1}\mathbb{A} (\lambda I+ \mathbb{A})^{-1}d\lambda.$$
	On the other hand, 
	\begin{equation*}
(\lambda I +\mathbb{A})^{-1}=	\begin{bmatrix}
			\lambda^2 (\lambda^{3}I+A)^{-1}&\lambda(\lambda^{3}I+A)^{-1} &(\lambda^{3}I+A)^{-1} \\
			-A(\lambda^{3}I-A)^{-1} &\lambda^2(\lambda^{3}I+A)^{-1} &-\lambda (\lambda^{3}I+A)^{-1} \\
			-\lambda A(\lambda^{3}I +A)^{-1} &-A(\lambda^{3}I+A)^{-1}  &\lambda^2(\lambda^{3}I +A)^{-1}
		\end{bmatrix} 
	\end{equation*}
for all $\lambda >0$. Consequently,
\begin{equation*}
	\mathbb{A}(\lambda I +\mathbb{A})^{-1}=	\begin{bmatrix}
		A(\lambda^{3}I+A)^{-1}&-\lambda^2(\lambda^{3}I+A)^{-1} &-\lambda (\lambda^{3}I+A)^{-1} \\
		\lambda A(\lambda^{3}I-A)^{-1} &A(\lambda^{3}I+A)^{-1} &-\lambda^2 (\lambda^{3}I+A)^{-1} \\
		\lambda^2 A(\lambda^{3}I +A)^{-1} &\lambda A(\lambda^{3}I+A)^{-1}  &A(\lambda^{3}I +A)^{-1}
	\end{bmatrix} 
\end{equation*}
for all $\lambda >0$. Now we apply in each entry $[\mathbb{A}(\lambda I +\mathbb{A})^{-1}]_{ij}$ the fractional formula for $A$ given by \eqref{FRACP}, and after the change of variable $\mu=\lambda^{3}$ ($\lambda=\mu^{\frac{1}{3}}$ and $d\lambda=\frac{1}{3}\mu^{\frac{-2}{3}}d\mu$) for $i\geqslant j$, we obtain the following: 
\begin{equation}\label{A1asneesa2}
	\begin{split}
	[\mathbb{A}(\lambda I +\mathbb{A})^{-1}]_{ij}	&=\dfrac{\sin(\alpha \pi)}{\pi}\int_0^\infty   \lambda^{i-j} \lambda^{\alpha-1}A (\lambda^{3}I+A))^{-1}d\lambda\\
		&=\dfrac{1}{3} \dfrac{\sin(\alpha \pi)}{\pi}\int_0^\infty \mu^{\frac{i-j}{3}} \mu^{\frac{\alpha}{3}-1}A(\mu I+A)^{-1} d\mu\\
		&=\dfrac{(-1)^{i-j}}{3}\dfrac{\sin\Big((\frac{\alpha+i-j}{3})3 \pi\Big)}{\pi}\int_0^\infty \mu^{\frac{\alpha+i-j}{3}-1}A(\mu I+A)^{-1} d\mu,
	\end{split}
\end{equation}
Now, for any $\theta$, we have,
\begin{equation}\label{sinnt}
	\begin{split}
\sin (3\theta \pi )&= \sin (\theta \pi +2\theta \pi )=\sin (\theta \pi)\cos (2\theta \pi)+\sin (2\theta \pi)\cos (\theta \pi)\\
&=\sin (\theta \pi)(\cos (2\theta \pi)+2\cos^2 (\theta \pi))\\
&=\sin (\theta \pi)(2\cos (2\theta \pi)+1)	
	\end{split}
\end{equation}
Thus, for $\theta=\frac{\alpha+i-j}{3}$, we obtain 
\begin{equation}\label{A1asnee}
	\sin\Big((\frac{\alpha+i-j}{3})3 \pi\Big)=\Big(2\cos\Big((\frac{\alpha+i-j}{3}) \pi\Big)+1\Big)\sin\Big((\frac{\alpha+i-j}{3}) \pi\Big)
\end{equation}
and by \eqref{A1asneesa2}, \eqref{A1asnee} and $0<\alpha+i-j<3$, we have
\begin{equation}\label{A1as1214sa2}
	[\mathbb{A}(\lambda I +\mathbb{A})^{-1}]_{ij}=\dfrac{(-1)^{i-j}}{3}\Big(2\cos\Big((\frac{\alpha+i-j}{3}) \pi\Big)+1\Big)A^{\frac{\alpha+i-j}{3}}.
\end{equation}

Similarly, for $i< j$, we obtain the following:  
\begin{equation}\label{mlas335}
	\begin{split}
[\mathbb{A}(\lambda I +\mathbb{A})^{-1}]_{ij}=		&-\dfrac{\sin(\alpha \pi)}{\pi}\int_0^\infty  \lambda^{\alpha+3+i-j-1} (\lambda^{3}I+A))^{-1}d\lambda\\
		&=-\dfrac{1}{3} \dfrac{\sin(\alpha \pi)}{\pi}\int_0^\infty \mu^{\frac{\alpha+i-j}{3}}(\mu I+A)^{-1} d\mu\\
		&=\dfrac{(-1)^{i-j+4}}{3}\dfrac{\sin\Big((\frac{\alpha+i-j+3}{3})3 \pi\Big)}{\pi}\int_0^\infty \mu^{\frac{\alpha+i-j+3}{3}-1}(\mu I+A)^{-1} d\mu,
	\end{split}
\end{equation}
As above from \eqref{mlas335}, \eqref{sinnt} and $0<\alpha+i-j+3<3$, we get
\begin{equation}\label{A1am92324}
	[\mathbb{A}(\lambda I +\mathbb{A})^{-1}]_{ij}=\dfrac{(-1)^{i-j}}{3}\Big(2\cos\Big((\frac{\alpha+i-j}{3}) \pi\Big)+1\Big)A^{\frac{\alpha+i-j}{n}}.
\end{equation}
and thanks to \eqref{A1as1214sa2} and  \eqref{A1am92324} we get \eqref{FPLO} for $0<\alpha <1$.

Finally, it is easy to see that the matrix representation of $\mathbb{A}^\alpha$ holds for any $0\leqslant\alpha\leqslant1$.
\end{proof}	
\begin{remark}
\begin{enumerate}
	\item If we take $\omega =0$, we obtain \cite[Theorem 2.7]{BezerraS2020}.
	\item  The assumption that $0\in \rho(A)$ and 
	therefore a whole neighborhood of zero is in $ \rho(A)$ was made mainly for 
	convenience. The Theorem  \ref{thm:FPLO} remains true even if $0\notin \rho(A)$. 
	 \item We can see that the matrix representation of $\mathbb{A}^\alpha$ is a Toeplitz matrix for any $0\leqslant\alpha\leqslant1$.
\end{enumerate}
\end{remark}

 The next result is an immediate consequence of  \cite[Theorem 3.6]{MartinezS1997}, see also \cite[Theorem 3.2]{MartinezS1997} and \cite[Theorem 3.4]{MartinezS1997}.
 \begin{corollary}\label{SFPLO}
 	\begin{equation}\label{ew34rf}
 		\sigma(\mathbb{A}^\alpha)=\{\lambda^\alpha: \lambda\in\sigma(\mathbb{A})\}=	\{\lambda^\alpha : \lambda^3 \in \sigma(A)\}
 	\end{equation}
 	for any $0<\alpha<1$.  
 \end{corollary}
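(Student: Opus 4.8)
The plan is to obtain the identity as a direct application of the spectral mapping theorem for fractional powers of operators of non-negative type, combined with the spectral description of $\mathbb{A}$ already established in Lemma \ref{SPLO}.

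First I would recall from Lemma \ref{PTO} that $\mathbb{A}$ is of non-negative type: it is closed, densely defined, $(-\infty,0)\subset\rho(\mathbb{A})$, and the resolvent satisfies $\|(\lambda I+\mathbb{A})^{-1}\|_{\mathcal{B}(X)}\le M/\lambda$ for all $\lambda>0$. This is precisely the hypothesis under which the fractional powers $\mathbb{A}^\alpha$, $0<\alpha<1$, are constructed in Theorem \ref{thm:FPLO}, and under which the spectral mapping results of Mart\'inez--Sanz \cite[Theorem 3.6]{MartinezS1997} (together with \cite[Theorem 3.2]{MartinezS1997} and \cite[Theorem 3.4]{MartinezS1997}) apply. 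These assert that for such an operator and $0<\alpha<1$ one has $\sigma(\mathbb{A}^\alpha)=\{z^\alpha:z\in\sigma(\mathbb{A})\}$, where $z^\alpha$ is taken with respect to the principal branch of the power function. Invoking this gives the first equality at once.

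The one point requiring verification is that the principal branch $z\mapsto z^\alpha$ is unambiguously defined on $\sigma(\mathbb{A})$, i.e. that the spectrum avoids the cut $(-\infty,0]$. By Remark \ref{rem:sp} the spectrum of $\mathbb{A}$ is contained in the union of the three sectors $\overline{\mathcal{S}(\omega/3)}$, $e^{i2\pi/3}\overline{\mathcal{S}(\omega/3)}$ and $e^{-i2\pi/3}\overline{\mathcal{S}(\omega/3)}$; since $0\le\omega\le\pi/2$ yields $\omega/3\le\pi/6$ and hence $2\pi/3+\omega/3\le 5\pi/6<\pi$, the whole spectrum lies strictly off the negative real axis. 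Thus $z^\alpha$ is single-valued and holomorphic in a neighbourhood of $\sigma(\mathbb{A})$, and the Mart\'inez--Sanz theorem applies without additional care.

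Finally, I would substitute the description $\sigma(\mathbb{A})=\{\lambda\in\mathbb{C}:\lambda^3\in\sigma(A)\}$ from Lemma \ref{SPLO} into the first equality to obtain $\sigma(\mathbb{A}^\alpha)=\{\lambda^\alpha:\lambda^3\in\sigma(A)\}$, which is the second displayed identity. The main (and essentially only) obstacle is confirming that the hypotheses of the spectral mapping theorem are met---non-negativity of $\mathbb{A}$ and the correct branch behaviour---after which the corollary is immediate; in particular one should track whether $0$ belongs to $\sigma(\mathbb{A})$, which by Remark \ref{rem:sp} occurs exactly when $0\in\sigma(A)$, so that the endpoint of the spectrum is handled consistently on both sides of the identity.
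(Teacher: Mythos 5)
Your proposal is correct and follows essentially the same route as the paper: the corollary is obtained by applying the Mart\'inez--Sanz spectral mapping theorem \cite[Theorem 3.6]{MartinezS1997} to the non-negative operator $\mathbb{A}$ (Lemma \ref{PTO}) and then substituting the description of $\sigma(\mathbb{A})$ from Lemma \ref{SPLO}. Your additional check that the spectrum avoids the branch cut is a harmless (and already guaranteed, since $(-\infty,0)\subset\rho(\mathbb{A})$) refinement of what the paper leaves implicit.
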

	We have, for every $r\geq 0$,
$$ re^{i\theta} \in \sigma(A) \Longleftrightarrow  r^{\alpha/3}e^{i\frac{\alpha\theta}{3}}, r^{\alpha/3}e^{\alpha i\frac{\theta+2\pi}{3}}, r^{\alpha/3}e^{i\alpha\frac{\theta-2\pi}{3}} \in  \sigma(\mathbb{A}^\alpha).$$
Thus, we can see that the  spectrum of $\mathbb{A}^\alpha$ is a reunion of possibly three disjointed  closed parts $\varGamma_1$, $\varGamma_2$ and $\varGamma_3$, with
\begin{equation}\label{spa}
	\varGamma_1 	\subset \overline{\mathcal{S}(\alpha\omega/3)}, \qquad 
	\varGamma_2 \subset e^{i\alpha\frac{2\pi}{3}}\overline{\mathcal{S}(\alpha\omega/3)},\qquad 	
	\text{ and }\qquad \varGamma_3 \subset e^{-i\alpha\frac{2\pi}{3}}\overline{\mathcal{S}(\alpha\omega/3)}.
\end{equation}
Consequently,
$$ \mathcal{S}(\pi-\frac{\alpha\omega}{3}) \cup\left\lbrace \lambda \in \mathbb{C}: \frac{\pi}{2}-\frac{\alpha\omega}{3}<\arg(\lambda) <\frac{\pi}{2}\right\rbrace  \cup \left\lbrace \lambda \in \mathbb{C}: -\frac{\pi}{2}<\arg(\lambda) <-\frac{\pi}{2}-\frac{\alpha\omega}{3}\right\rbrace\subset \rho(\mathbb{A}^\alpha ).$$

  From \cite[Remark 4.6.12]{Amann1995}, we can deduce that $-\mathbb{A}^{\alpha}$ generates a strongly continuous analytic semigroup on $X$ for $0<\alpha\leq \frac{1}{2}$. In the following proposition we prove that $\alpha$ is given in term of the angle $\omega$ of the sector for which $A$ is m-$\omega$ accretive. This is a particular interest because $\alpha$ maybe strictly greater than to $1/2$. For example, if $A$ is a self adjoint operator, i.e $\omega=0$, then   $-\mathbb{A}^{\alpha}$ generates a strongly continuous analytic semigroup on $X$ for $0<\alpha < \frac{3}{4}$. The last case is proved in \cite[Theorem 2.8]{BezerraS2020}. Our main result cover this case and we give more direct proof. 
 
 \begin{proposition}\label{GHSG}Assume that $A$ be m-$\omega$-accretive operator, $0 \leq \omega\leq  \pi/2$ and $0\in \rho(A)$. Set $\alpha^*= \dfrac{3\pi}{4\pi+2\omega}$. Then $-\mathbb{A}^{\alpha}$ generates a strongly continuous  semigroup on $X$ for $\alpha=\alpha^*$ and  generates a strongly continuous analytic semigroup on $X$ for $0<\alpha< \alpha^*$.
 \end{proposition}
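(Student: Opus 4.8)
The statement is a generation theorem, so the plan is to reduce it to the standard sectoriality criterion and then read off the threshold from the spectral angle of $\mathbb{A}^\alpha$. Recall that $-B$ generates a strongly continuous analytic semigroup precisely when $B$ is densely defined of positive type, its spectrum lies in a sector $\overline{\mathcal{S}(\psi)}$ with $\psi<\pi/2$, and the resolvent satisfies $\|(\lambda I+B)^{-1}\|_{\mathcal{B}(X)}\leq M/|\lambda|$ off every slightly larger sector (see Pazy \cite[Theorem 2.5.2]{Pazy1983} and Amann \cite[Remark 4.6.12]{Amann1995}). By Theorem \ref{thm:FPLO} and Corollary \ref{PTOT} the operator $\mathbb{A}^\alpha$ is densely defined and of positive type, so the entire problem is to determine the smallest admissible $\psi$ and to analyze the borderline $\psi=\pi/2$.

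First I would compute the spectral angle. From Corollary \ref{SFPLO} and the inclusion \eqref{spa}, $\sigma(\mathbb{A}^\alpha)$ is contained in the union of the three sectors $\varGamma_1,\varGamma_2,\varGamma_3$, whose largest argument in modulus is attained on $\varGamma_2$ and $\varGamma_3$ and equals $\tfrac{\alpha(2\pi+\omega)}{3}$. Hence $\sigma(\mathbb{A}^\alpha)\subset\overline{\mathcal{S}(\tfrac{\alpha(2\pi+\omega)}{3})}$, and this half-angle is sharp. The elementary inequality $\tfrac{\alpha(2\pi+\omega)}{3}<\tfrac{\pi}{2}$ is equivalent to $\alpha<\tfrac{3\pi}{4\pi+2\omega}=\alpha^*$, with equality exactly at $\alpha=\alpha^*$; this is precisely where the spectral angle crosses $\pi/2$ and explains the value of $\alpha^*$.

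The spectral inclusion alone does not give generation, so the next step is the matching resolvent estimate. I would establish that $\mathbb{A}$ is sectorial of angle $\tfrac{2\pi+\omega}{3}$: using the explicit resolvent read off from \eqref{dasd4} together with the optimal m-$\omega$-accretive bound $\|(\mu I-A)^{-1}\|_{\mathcal{B}(\mathcal{H})}\leq 1/\mathrm{dist}(\mu,\mathcal{S}(\omega))$, one extends the estimate of Lemma \ref{PTO} from the negative axis to the full complementary sector $\{|\arg\lambda|>\tfrac{2\pi+\omega}{3}+\varepsilon\}$ (for such $\lambda$ the point $\lambda^3$ stays a fixed angular distance away from $\overline{\mathcal{S}(\omega)}$, which is exactly what the bound needs). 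Then the Martínez--Sanz calculus for fractional powers \cite[Theorem 3.6]{MartinezS1997} upgrades this to the statement that $\mathbb{A}^\alpha$ is sectorial of angle $\tfrac{\alpha(2\pi+\omega)}{3}$, with the resolvent bound valid off every larger sector. For $0<\alpha<\alpha^*$ this angle is $<\pi/2$ and the analytic-generation criterion applies directly, proving the first assertion. Alternatively, one can bypass \cite{MartinezS1997} and bound $(\lambda I+\mathbb{A}^\alpha)^{-1}$ directly from the explicit Toeplitz representation \eqref{FPLO}, treating the $3\times 3$ symbol as a matrix-valued function of $A$ and inverting it through the functional calculus of $A$.

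The hard part will be the borderline $\alpha=\alpha^*$, where $\tfrac{\alpha^*(2\pi+\omega)}{3}=\tfrac{\pi}{2}$. Analyticity genuinely fails here, because the resolvent estimate only holds off sectors of half-angle strictly larger than $\pi/2$, that is, on sectors around $(-\infty,0)$ of half-angle strictly less than $\pi/2$; sectoriality of angle exactly $\pi/2$ does not by itself yield the uniform Hille--Yosida bounds $\|(\lambda I+\mathbb{A}^{\alpha^*})^{-n}\|_{\mathcal{B}(X)}\leq M/\lambda^n$, since the spectrum may approach the imaginary axis. The way around this is to exploit the optimal (m-accretive type) constant inherited from $A$ rather than a mere sectorial bound. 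The function $z\mapsto e^{-tz^{\alpha^*}}$ is bounded and holomorphic on the sector $\mathcal{S}(\tfrac{2\pi+\omega}{3})$ precisely because $\alpha^*\cdot\tfrac{2\pi+\omega}{3}=\tfrac{\pi}{2}$ forces $\mathrm{Re}(z^{\alpha^*})\geq 0$ there, so the holomorphic functional calculus of the sectorial operator $\mathbb{A}$ defines the candidate semigroup $T(t)=e^{-t\mathbb{A}^{\alpha^*}}$; it then remains to verify the semigroup law, the uniform bound, and strong continuity as $t\downarrow 0$ on the dense domain $\mathcal{D}(\mathbb{A}^{\alpha^*})$, which I expect to follow by reading $(\lambda I+\mathbb{A}^{\alpha^*})^{-n}$ off the symbol in \eqref{FPLO} and controlling it uniformly over $\sigma(A)\subset\overline{\mathcal{S}(\omega)}$. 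This borderline construction, together with the sharp resolvent bound feeding it, is where the real work lies; the case $\omega=0$ of \cite[Theorem 2.8]{BezerraS2020} is recovered by specializing to $\alpha^*=\tfrac{3}{4}$.
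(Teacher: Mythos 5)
Your identification of the threshold is exactly the paper's computation: from \eqref{spa} the extreme boundary rays of $e^{\pm i\alpha\frac{2\pi}{3}}\overline{\mathcal{S}(\alpha\omega/3)}$ have argument $\pm\alpha\frac{2\pi+\omega}{3}$, and forcing this to be at most $\pi/2$ yields $\alpha^*=\frac{3\pi}{4\pi+2\omega}$. Where you genuinely diverge is in what you do with that information. The paper's proof consists of the positive-axis bound inherited from Corollary \ref{PTOT} together with the spectral inclusions $\sigma(\mathbb{A}^{\alpha})\subset\overline{\mathcal{S}(\varpi)}$, $\varpi<\pi/2$ (resp.\ $\varpi=\pi/2$ at $\alpha=\alpha^*$), and concludes generation directly from the location of the spectrum; it never proves the resolvent estimate $\|(\lambda I+\mathbb{A}^{\alpha})^{-1}\|_{\mathcal{B}(X)}\leq M/|\lambda|$ outside a sector of half-angle $<\pi/2$. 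You correctly observe that spectral inclusion alone is not a generation criterion and you supply the missing link: sectoriality of $\mathbb{A}$ of angle $\frac{2\pi+\omega}{3}$, obtained from the explicit resolvent \eqref{dasd4} and the m-$\omega$-accretive bound (for $|\arg\lambda|\geq\frac{2\pi+\omega}{3}+\varepsilon$ the point $\lambda^{3}$ stays at angular distance $\geq 3\varepsilon$ from $\overline{\mathcal{S}(\omega)}$, and the moment inequality handles the fractional entries exactly as in Lemma \ref{PTO}), followed by the angle-scaling property of fractional powers to get sectoriality of $\mathbb{A}^{\alpha}$ of angle $\frac{\alpha(2\pi+\omega)}{3}<\pi/2$. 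For $0<\alpha<\alpha^*$ this is a more complete argument than the one printed; only note that the angle/resolvent transfer is not \cite[Theorem 3.6]{MartinezS1997} (that is the spectral mapping theorem) but the power theorem for non-negative operators, e.g.\ in \cite{MartinezS2001} or \cite{Hasse2006}.

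At the borderline $\alpha=\alpha^*$ your plan has a concrete gap which you partially acknowledge: boundedness and holomorphy of $z\mapsto e^{-tz^{\alpha^*}}$ on $\mathcal{S}(\frac{2\pi+\omega}{3})$ does not by itself produce a bounded operator $e^{-t\mathbb{A}^{\alpha^*}}$ through the sectorial functional calculus, since the function has no decay along the boundary rays; defining $T(t)$ requires a regularized calculus, and the uniform bound in $t$ (equivalently, Hille--Yosida estimates on all powers of the resolvent) amounts to a bounded $H^{\infty}$-calculus type statement that your outline does not establish, and that does not follow from sectoriality of angle exactly $\pi/2$. To be fair, the paper's own treatment of $\alpha=\alpha^*$ is no stronger --- it records only $\sigma(\mathbb{A}^{\alpha^*})\subset\overline{\mathcal{S}(\pi/2)}$ and the positive-axis bound --- so your proposal is at least as complete as the published argument (and more honest about where the difficulty lies), but neither closes the $\alpha=\alpha^*$ case rigorously; in the self-adjoint case $\omega=0$ of \cite{BezerraS2020} one can exploit the explicit spectral decomposition, which is precisely what is unavailable here.
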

 \begin{proof}Firstly, $\mathbb{A}^{\alpha}$ is closed densely defined and there exists a positive constant $K$, such that
 	\begin{equation*}
 		\|(\lambda I+\mathbb{A}^{\alpha})^{-1}\|_{\mathcal{B}(X)}\leqslant \frac{K}{\lambda+1}, \ \mbox{for all}\ \lambda\geq 0.
 	\end{equation*}
 	 Form inclusions in \eqref{spa}, we can see that the two  boundary rays of the sector $e^{i\alpha\frac{2\pi}{3}}\overline{\mathcal{S}(\alpha\omega/3)}$ are the complex numbers of argument $\alpha\frac{2\pi}{3} +\frac{\alpha\omega}{3}$ and $\alpha\frac{2\pi}{3} -\frac{\alpha\omega}{3}$, respectively. To forces this sector to be in the closed right complex half plane it suffices that $\alpha\frac{2\pi}{3} +\frac{\alpha\omega}{3}\leq\frac{\pi}{2}$. That is $\alpha\leq \dfrac{3\pi}{2(2\pi+\omega)}$. The same argument for the sector $ e^{-i\alpha\frac{2\pi}{3}}\overline{\mathcal{S}(\alpha\omega/3)}$ in the opposite direction. Consequently, 
 	$$\sigma(\mathbb{A}^\alpha ) \subset\overline{\mathcal{S}(\pi/2)}\qquad \text{ for } \alpha= \dfrac{3\pi}{4\pi+2\omega} $$
 	and
 		$$\sigma(\mathbb{A}^\alpha ) \subset\overline{\mathcal{S}(\varpi)}\qquad \text{ with }\qquad \varpi<\pi/2  \qquad \text{ for } \alpha< \dfrac{3\pi}{4\pi+2\omega}  .$$
 	Therefore,
 	$$ \mathcal{S}(\pi/2) \subset \rho(-\mathbb{A}^\alpha )\qquad \text{ for } \alpha= \dfrac{3\pi}{4\pi+2\varpi}  $$
 	and
 	$$\mathcal{S}(\frac{\pi}{2} +\varpi) \subset \rho(-\mathbb{A}^\alpha ) \qquad \text{ with }\qquad \varpi<\pi/2  \qquad \text{ for } \alpha< \dfrac{3\pi}{4\pi+2\omega} .$$	
 \end{proof}
\begin{remark} 
	We include below figures \ref{fig02} and \ref{fig03} which reflects, in particular, the loss of a ``good'' sectoriality property as $\alpha\nearrow \alpha^*$. It is  remarkable to see that when $\alpha$ decreases from one there is a rotation of the two sectors located in the left half-plane given in figure \ref{fig01} to be in the right  half-plane  and at the same time the angle of sectoriality decreases. Moreover, e.g. for $\alpha^*<\alpha\leqslant1$ the figures \ref{fig01} and \ref{fig02} reflects, in particular, the loss of well-posedness of the Cauchy problem \eqref{Linprob} and hence those of \eqref{Eq1}-\eqref{IC}. 
\end{remark}
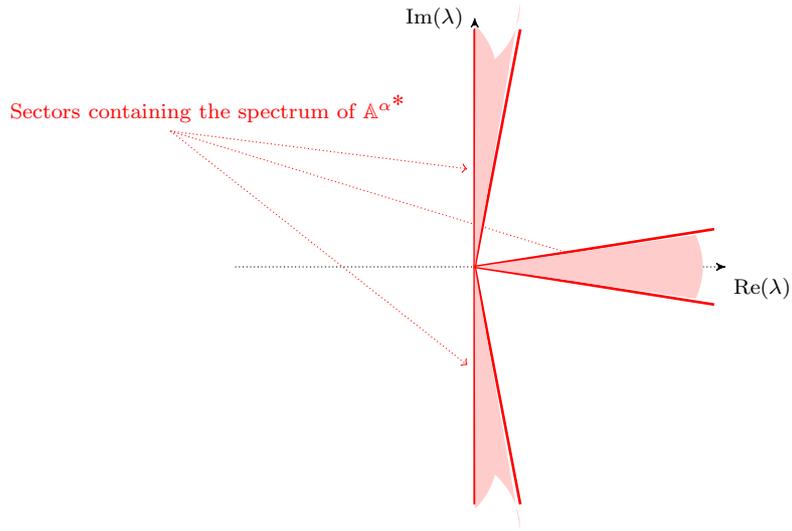
\begin{figure}[H]
	\begin{center}
		\begin{tikzpicture}
			\draw[-stealth', densely dotted] (-3.15,0) -- (3.3,0) node[below] {\ \ \ \ \ \ \ $\scriptstyle {\rm Re} (\lambda)$};
			\draw[-stealth', densely dotted ] (0,-3.15) -- (0,3.3) node[left] {\color{black}$\scriptstyle{\rm Im} (\lambda)$};
			\node at (-3.5,2.1) {{\tiny\color{red} Sectors containing the spectrum of $\mathbb{A}^{\alpha^{*}}$}};
			\draw[color=red,->, densely dotted] (-4,1.8) -- (-0.1,-1.3);
			\draw[color=red,->, densely dotted] (-4,1.8) -- (-0.1,1.3);
			\draw[color=red,->,densely dotted] (-4,1.8) -- (1.5,0.1);
			\draw[color=red, line width=1pt] (0,3.15) -- (0,0);
			\draw[color=red, line width=1pt] (0.6,3.15) -- (0,0);
			\fill[red!20!] (0,0) -- (0.3,2.5) arc (0:45:1cm)-- cycle;
			\fill[red!20!] (0,0) -- (0.6,3.5) arc (0:-50:1cm)-- cycle;
			\draw[color=red, line width=1pt] (3.15,0.5) -- (0,0);
			\draw[color=red, line width=1pt] (3.15,-0.5) -- (0,0);
			\fill[red!20!] (0,0) -- (3,0) arc (0:25:1cm)-- cycle;
			\fill[red!20!] (0,0) -- (3,0) arc (0:-25:1cm)-- cycle;
			\draw[color=red, line width=1pt] (0,-3.15) -- (0,0);
			\draw[color=red, line width=1pt] (0.6,-3.15) -- (0,0);
			\fill[red!20!] (0,0) -- (0.3,-2.5) arc (0:-45:1cm)-- cycle;
			\fill[red!20!] (0,0) -- (0.6,-3.5) arc (0:50:1cm)-- cycle;
		\end{tikzpicture}
	\end{center}
	\caption{Location of the spectrum of  $\mathbb{A}^{\alpha^*}$.}\label{fig02}
\end{figure}
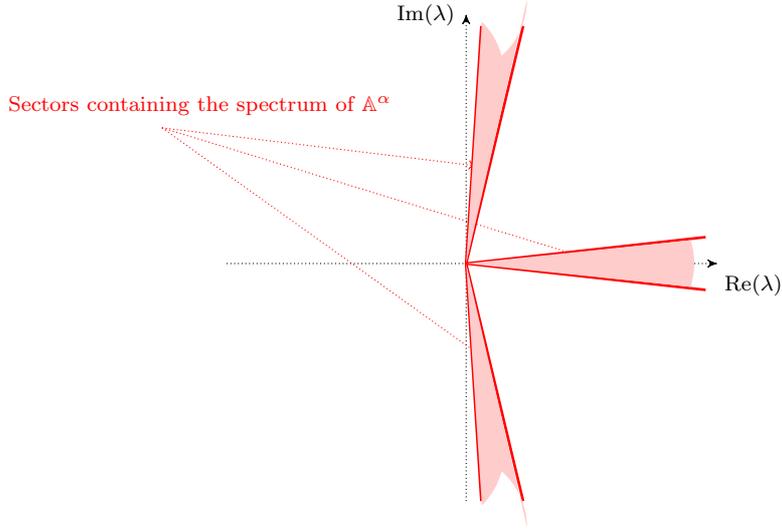
\begin{figure}[H]
	\begin{center}
		\begin{tikzpicture}
			\draw[-stealth', densely dotted] (-3.15,0) -- (3.3,0) node[below] {\ \ \ \ \ \ \ $\scriptstyle {\rm Re} (\lambda)$};
			\draw[-stealth', densely dotted ] (0,-3.15) -- (0,3.3) node[left] {\color{black}$\scriptstyle{\rm Im} (\lambda)$};
			\node at (-3.5,2.1) {{\tiny\color{red} Sectors containing the spectrum of $\mathbb{A}^{\alpha}$}};
			\draw[color=red,->, densely dotted] (-4,1.8) -- (0.3,-1.3);
			\draw[color=red,->, densely dotted] (-4,1.8) -- (0.1,1.3);
			\draw[color=red,->,densely dotted] (-4,1.8) -- (1.5,0.1);
			\draw[color=red, line width=1pt] (0.2,3.15) -- (0,0);
			\draw[color=red, line width=1pt] (0.75,3.15) -- (0,0);
			\fill[red!20!] (0,0) -- (0.5,2.5) arc (0:45:1cm)-- cycle;
			\fill[red!20!] (0,0) -- (0.8,3.5) arc (0:-50:1cm)-- cycle;
			\draw[color=red, line width=1pt] (3.15,0.35) -- (0,0);
			\draw[color=red, line width=1pt] (3.15,-0.35) -- (0,0);
			\fill[red!20!] (0,0) -- (3,0) arc (0:18:1cm)-- cycle;
			\fill[red!20!] (0,0) -- (3,0) arc (0:-18:1cm)-- cycle;
			\draw[color=red, line width=1pt] (0.2,-3.15) -- (0,0);
			\draw[color=red, line width=1pt] (0.75,-3.15) -- (0,0);
			\fill[red!20!] (0,0) -- (0.5,-2.5) arc (0:-45:1cm)-- cycle;
			\fill[red!20!] (0,0) -- (0.8,-3.5) arc (0:50:1cm)-- cycle;
		\end{tikzpicture}
	\end{center}
	\caption{Location of the spectrum of  $\mathbb{A}^{\alpha}$ with $0<\alpha<\alpha^*$.}\label{fig03}
\end{figure}

\section{A third  order Cauchy problem associated to  fractional powers of the linearized operator}\label{Sec3}

From  Theorem \ref{thm:FPLO}, we showed  that it is possible to calculate explicitly the fractional power $\mathbb{A}^\alpha$ of the operator $\mathbb{A}$ for  $0<\alpha<1$. Furthermore, $-\mathbb{A}^\alpha$ generates a strongly continuous analytic semigroup on $X$ for $0<\alpha< \alpha^*$, by Proposition \ref{GHSG}. This enables us to consider the following fist order Cauchy problem associated to the fractional power of the linearized operator,
\begin{equation}\label{fracmainprob}
	\begin{cases}
		\dfrac{d {\bf u}}{dt}+ \mathbb{A}^\alpha {\bf u} = 0,\ t> 0,\quad  0<\alpha<\alpha^*,\\
		{\bf u}(0)={\bf u}_0.
	\end{cases}
\end{equation} 
Here, $\mathbb{A}^\alpha: \mathcal{D}(\mathbb{A}^\alpha)\subset X\to X$ denotes the fractional power operator of $\mathbb{A}$. 

By Proposition \ref{GHSG} and \cite[Corollary 1.5, Chap 4.]{Pazy1983}, we can assert that for any ${\bf u}_0\in X$ the problem  \eqref{fracmainprob} admits unique solution  given by ${\bf u}(t)=e^{-t\mathbb{A}^\alpha}{\bf u}_0$ for all $0<\alpha< \alpha^*$. Also, by Proposition \ref{GHSG} and \cite[Theorem 1.3, Chap 4.]{Pazy1983}, the initial value problem \eqref{fracmainprob} has a unique solution ${\bf u}(t)=\mathcal{T}(t){\bf u}_0$, for every initial value ${\bf u}_0\in\mathcal{D}(\mathbb{A}^{\alpha^{*}})$, where $\mathcal{T}(t)$ is the strongly continuous semigroup generated by  $\mathbb{A}^{\alpha^{*}}$. 
This allows us to define the associated third  order Cauchy problem for which \eqref{fracmainprob} will be it associated linearized problem. 
\begin{equation}\label{Eq2}
u'''(t)+3\varUpsilon_0^\alpha A^{\frac{\alpha}{3}}u''(t)+ 3\varUpsilon_0^\alpha A^{\frac{2\alpha}{3}}u''(t)+A^{\alpha}u(t) = 0,\quad t> 0,
\end{equation}
with initial conditions given by
\begin{equation}\label{IC2}
	u(0)=\varphi,\ u'(0)= \psi,\ u''(0)=\xi, 
\end{equation}
where $\varUpsilon_0^\alpha=2\cos\dfrac{2\pi\alpha}{3} +1$ and $0<\alpha< \alpha^* =\dfrac{3\pi}{4\pi+2\omega}$.

\begin{theorem}Assume that $A$ be m-$\omega$-accretive operator, $0 \leq \omega\leq  \pi/2$ and $0\in \rho(A)$. Let $0<\alpha< \alpha^*$. Then for every $\varphi\in \mathcal{H}^{\frac{2}{3}}$, $\psi \in \mathcal{H}^{\frac{2-\alpha}{3}}$ and $\xi\in \mathcal{H}^{\frac{2-2\alpha}{3}}$ the Cauchy problem 
\eqref{Eq2}-\eqref{IC2} has a unique solution 
$$u\in  C^{3}((0, +\infty);\mathcal{H}) \cap C^{2}((0, +\infty); \mathcal{H}^{\frac{\alpha}{3}})\cap C^{1}((0, +\infty); \mathcal{H}^{\frac{2\alpha}{3}}) \cap C((0, +\infty); \mathcal{H}^{\alpha}), $$
and given by
\begin{equation}
	\begin{aligned}
		&	u(t)
		=e^{taA^{\frac{\alpha}{3}}}\varphi-\dfrac{a}{b-a}\Big[e^{tbA^{\frac{\alpha}{3}}}-e^{taA^{\frac{\alpha}{3}}}\Big]\varphi\\
		&+\dfrac{ab}{1+b}\Big[\dfrac{1}{b-a}e^{tbA^{\frac{\alpha}{3}}}-\dfrac{1+b}{(1+a)(b-a)}e^{taA^{\frac{\alpha}{3}}}+\dfrac{1}{1+a}e^{-tA^{\frac{\alpha}{3}}}\Big]\varphi\\
		&+\dfrac{1}{b-a}A^{-\frac{\alpha}{3}}\Big[e^{tbA^{\frac{\alpha}{3}}}-e^{taA^{\frac{\alpha}{3}}}\Big]\psi \\
		&-\dfrac{a+b}{1+b}A^{-\frac{\alpha}{3}}\Big[\dfrac{1}{b-a}e^{tbA^{\frac{\alpha}{3}}}-\dfrac{1+b}{(1+a)(b-a)}e^{taA^{\frac{\alpha}{3}}}+\dfrac{1}{1+a}e^{-tA^{\frac{\alpha}{3}}}\Big]\psi\\
		& +\dfrac{1}{1+b}A^{-\frac{2\alpha}{3}}\Big[\dfrac{1}{b-a}e^{tbA^{\frac{\alpha}{3}}}-\dfrac{1+b}{(1+a)(b-a)}e^{taA^{\frac{\alpha}{3}}}+\dfrac{1}{1+a}e^{-tA^{\frac{\alpha}{3}}}\Big]\xi,
	\end{aligned}\label{sut}
\end{equation}
with $a=e^{i\pi{\frac{3-2\alpha}{3}}}$ and $b=e^{-i\pi{\frac{3+2\alpha}{3}}}$.
\end{theorem}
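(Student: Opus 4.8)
The plan is to exploit the fact that the third order pencil governing \eqref{Eq2} factors through the single operator $B:=A^{\frac{\alpha}{3}}$, which is time-independent and therefore commutes with $D:=\frac{d}{dt}$. Writing the two nonreal characteristic numbers $a,b$ as in the statement (the relevant relations being $ab=1$ and $a+b=-2\cos\frac{2\pi\alpha}{3}$, so that $1-(a+b)=\varUpsilon_0^\alpha$), I would first record the algebraic identity
\[
D^3+\varUpsilon_0^\alpha B D^2+\varUpsilon_0^\alpha B^2 D+B^3=(D-aB)(D-bB)(D+B),
\]
verified by expanding the right-hand side and using $ab=1$ and $1-(a+b)=\varUpsilon_0^\alpha$. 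The three scalars $a,b,-1$ are precisely the roots of the characteristic polynomial of the $3\times3$ symbol of $-\mathbb{A}^\alpha$ furnished by Corollary \ref{SFPLO}, and they are exactly the exponential rates in \eqref{sut}. Since the three factors commute, the fundamental solutions of \eqref{Eq2} are $e^{taB}$, $e^{tbB}$ and $e^{-tB}$.

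Next I would justify that these three families are genuine bounded analytic semigroups. Because $A$ is m-$\omega$-accretive with $0\in\rho(A)$, the operator $B=A^{\frac{\alpha}{3}}$ is m-$(\alpha\omega/3)$-accretive and boundedly invertible, so $A^{-\frac{\alpha}{3}}$ and $A^{-\frac{2\alpha}{3}}$ are bounded and every coefficient in \eqref{sut} is well defined. The spectral inclusions \eqref{spa} together with the angle count in the proof of Proposition \ref{GHSG} show that, for $0<\alpha<\alpha^*$, each of $-B$, $aB$ and $bB$ has spectrum in a proper subsector of the open left half-plane; read sector by sector off $\varGamma_1,\varGamma_2,\varGamma_3$, this is exactly the sectoriality already used for $-\mathbb{A}^\alpha$, so each generates a bounded analytic semigroup on $\mathcal{H}$. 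Analyticity then yields the smoothing $e^{taB}x,\,e^{tbB}x,\,e^{-tB}x\in\bigcap_{k\ge0}\mathcal{D}(B^k)$ for $t>0$, with each time derivative producing one factor of $B\sim A^{\frac{\alpha}{3}}$.

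For existence I would set $u(t)=e^{taB}c_1+e^{tbB}c_2+e^{-tB}c_3$ and determine $c_1,c_2,c_3$ from \eqref{IC2}. Evaluating and differentiating at $t=0$ gives the operator Vandermonde system
\[
\begin{cases}
c_1+c_2+c_3=\varphi,\\
aBc_1+bBc_2-Bc_3=\psi,\\
a^2B^2c_1+b^2B^2c_2+B^2c_3=\xi,
\end{cases}
\]
whose scalar Vandermonde determinant $(b-a)(-1-a)(-1-b)$ is nonzero since the three roots are distinct; solving it and clearing $B$ through the bounded $B^{-1}=A^{-\frac{\alpha}{3}}$, $B^{-2}=A^{-\frac{2\alpha}{3}}$ reproduces the explicit combinations of $\varphi$, $A^{-\frac{\alpha}{3}}\psi$ and $A^{-\frac{2\alpha}{3}}\xi$ in \eqref{sut}. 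The regularity is then read off from the smoothing: each summand maps $\mathcal{H}$ into every $\mathcal{D}(B^k)$ for $t>0$ and each derivative costs one power $A^{\frac{\alpha}{3}}$, giving $u\in C^{3}((0,\infty);\mathcal{H})\cap C^{2}((0,\infty);\mathcal{H}^{\alpha/3})\cap C^{1}((0,\infty);\mathcal{H}^{2\alpha/3})\cap C((0,\infty);\mathcal{H}^{\alpha})$, while the hypotheses $\varphi\in\mathcal{H}^{2/3}$, $\psi\in\mathcal{H}^{(2-\alpha)/3}$, $\xi\in\mathcal{H}^{(2-2\alpha)/3}$ are precisely the trace regularity needed for $u(0),u'(0),u''(0)$ to be attained as strong limits.

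Finally, for uniqueness, any solution of \eqref{Eq2}--\eqref{IC2} gives, through the phase variables intertwining it with $\mathbb{A}^\alpha$, a solution of the first order problem \eqref{fracmainprob}, which is unique by Proposition \ref{GHSG} together with \cite[Corollary 1.5, Chap.~4]{Pazy1983}; alternatively one peels off the commuting factors $(D-aB)$, $(D-bB)$, $(D+B)$ one at a time, invoking uniqueness for each resulting first order Cauchy problem. I expect the main obstacle to be the bookkeeping of fractional-power domains: making rigorous the commutation of the unbounded $A^{\frac{\alpha}{3}}$ with the three semigroups (handled through the common functional calculus of the single operator $A$) and, above all, checking that the analytic smoothing matches the claimed scale of spaces \emph{exactly} and that the initial traces are attained in the prescribed spaces $\mathcal{H}^{2/3}$, $\mathcal{H}^{(2-\alpha)/3}$, $\mathcal{H}^{(2-2\alpha)/3}$ rather than merely in a weaker topology.
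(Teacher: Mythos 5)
Your proposal is correct and is essentially the paper's own argument: both rest on factoring the third-order pencil through $B=A^{\frac{\alpha}{3}}$ into the commuting first-order factors $(D-aB)(D-bB)(D+B)$ with rates $a$, $b$, $-1$, on the fact that $aB$, $bB$ and $-B$ generate strongly continuous analytic semigroups precisely for $0<\alpha<\alpha^*$, and on uniqueness via the equivalent first-order problem for $\mathbb{A}^{\alpha}$ (Proposition \ref{GHSG} together with Pazy). The only real difference is cosmetic: you recover the coefficients in \eqref{sut} by solving the operator Vandermonde system attached to the ansatz $e^{taB}c_1+e^{tbB}c_2+e^{-tB}c_3$, whereas the paper integrates the factored triangular system \eqref{*} successively by variation of constants; both computations lead to the same representation formula.
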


\begin{proof}First, the Cauchy problem 
	\eqref{Eq2}-\eqref{IC2} is equivalent to the first order problem
	\begin{equation}\label{fracmainprob1}
			\dfrac{d }{dt}\left[\begin{smallmatrix} u\\ v \\ w\end{smallmatrix}\right]+ \mathbb{A}^\alpha \left[\begin{smallmatrix} u\\ v \\ w\end{smallmatrix}\right] = 0,\ t> 0,\quad  0<\alpha<\alpha^*,\\
	\end{equation} 
subject to the initial conditions
	\begin{equation*}
		\begin{cases}
			u(0)= \varphi \in \mathcal{H}^{\frac{2}{3}}\\
			
			v(0)=\dfrac{1}{(\varUpsilon_1^\alpha)^3-(\varUpsilon_0^\alpha)^3}\Big[\varUpsilon_1^\alpha(\varUpsilon_2^\alpha-\varUpsilon_0^\alpha)A^{\frac{1}{3}}\varphi -(\varUpsilon_2^\alpha+3\varUpsilon_0^\alpha\varUpsilon_1^\alpha)A^{\frac{1-\alpha}{3}}\psi-\varUpsilon_1^\alpha A^{\frac{1-2\alpha}{3}}\xi\Big]\in \mathcal{H}^{\frac{1}{3}},\\
			w(0) =\dfrac{1}{(\varUpsilon_2^\alpha)^3-(\varUpsilon_1^\alpha)^3}\Big[\varUpsilon_2^\alpha(\varUpsilon_0^\alpha-\varUpsilon_1^\alpha)A^{\frac{2}{3}}\varphi +(\varUpsilon_1^\alpha+3\varUpsilon_0^\alpha\varUpsilon_2^\alpha)A^{\frac{2-\alpha}{3}}\psi-\varUpsilon_2^\alpha A^{\frac{1-2\alpha}{3}}\xi\Big]\in \mathcal{H},
		\end{cases}
	\end{equation*}
where $\mathbb{A}^\alpha$ is given by \eqref{FPLO} and $\varUpsilon_j^\alpha$ are defined by \eqref{Varupsion}   $0<\alpha< \alpha^*$.
	Obviously, \eqref{fracmainprob1} is equivalent to the following system
	\begin{equation}\label{sysequ}
		\begin{cases}
			u'(t)+\varUpsilon_0^\alpha A^{\frac{\alpha }{3}} u(t)-\varUpsilon_2^\alpha A^{\frac{\alpha-1 }{3}} v(t)+\varUpsilon_1^\alpha A^{\frac{\alpha-2 }{3}} w(t)=0\\
			v'(t)-\varUpsilon_1^\alpha A^{\frac{\alpha+1 }{3}} u(t)-\varUpsilon_0^\alpha A^{\frac{\alpha}{3}} v(t)-\varUpsilon_2^\alpha A^{\frac{\alpha-1 }{3}} w(t)=0\\
				w'(t)-\varUpsilon_2^\alpha A^{\frac{\alpha+2 }{3}} u(t)-\varUpsilon_1^\alpha A^{\frac{\alpha+1}{3}} v(t)+\varUpsilon_0^\alpha A^{\frac{\alpha }{3}} w(t)=0.
		\end{cases}
	\end{equation}
By straightforward calculus, we obtain \eqref{Eq2} subject to \eqref{IC2}.  The existence and the uniqueness  result follows from  Proposition \ref{GHSG}  and \cite[Corollary 1.5, Chap 4.]{Pazy1983}. Also, we have
$$
\left[\begin{smallmatrix} u(t)\\ v(t) \\ w(t)\end{smallmatrix}\right]\in \mathcal{D} (\mathbb{A}^\alpha) =\mathcal{H}^{\frac{\alpha+2}{3}}\times \mathcal{H}^{\frac{\alpha+1}{3}}\times \mathcal{H}^{\frac{\alpha}{3}},\quad t\geq 0,\quad  0<\alpha<\alpha^*.
$$
Consequently, by \eqref{sysequ}, we get
$$
u(t)\in \mathcal{H}^{\frac{\alpha+2}{3}}, u'(t)\in \mathcal{H}^{\frac{\alpha}{3}}, u''(t)\in \mathcal{H}^{\frac{2-\alpha}{3}}, \quad t\geq 0,\quad  0<\alpha<\alpha^*.
$$
On the other hand, 	\eqref{Eq2} it can be written as the equivalent  Cauchy problem for
	the system of first order differential equations
	\begin{equation}
		\begin{gathered}
			u'(t)-aBu(t)=f(t), \\
			f'(t)-bBf(t)=g(t) , \\
			g'(t)+Bg(t)=0,\quad t> 0,
		\end{gathered}   \label{*}
	\end{equation}
where $B=A^{\frac{\alpha }{3}}$,	$a=e^{i\pi{\frac{3-2\alpha}{3}}}$ and $b=e^{-i\pi{\frac{3+2\alpha}{3}}}$. Since the operators $aB$, $bB$ and $-B$ are generators of  strongly continuous analytic semigroups, then by integrating these equations, we can write
	\begin{equation}
		\begin{gathered}
			g(t)=e^{-tB}g(0), \\
			f(t)=e^{tbB}f(0)+\int_{0}^te^{(
				t-s) bB}g( s) ds, \\
			u(t)=e^{taB}u(0)+\int_{0}^te^{(
				t-s) aB}f( s) ds.
		\end{gathered}  \label{**}
	\end{equation}
	Applying system of equations \eqref{*}, we obtain
	\begin{gather*}
		f(0)=u'(0)-aBu(0), \\
		g(0)=v'(0)-bBv(0)=u''(0)-(a+b)Bu'(0)+abB^2u(0).
	\end{gather*}
The fact that $u(0)\in \mathcal{H}^{\frac{2}{3}}\subset \mathcal{H}^{\frac{\alpha}{3}}$,  $u'(0) \in \mathcal{H}^{\frac{2-\alpha}{3}}\subset \mathcal{H}^{\frac{\alpha}{3}}$ and $u''(0)\in \mathcal{H}^{\frac{2-\alpha}{3}}\subset \mathcal{H}^{\frac{2\alpha}{3}}$, 
then, we have $g(0)\in \mathcal{H}^{\frac{2\alpha}{3}}$, $f(0)\in  \mathcal{H}^{\frac{\alpha}{3}}$ and
	\begin{equation}
		g(t)=e^{-tB}[ u''(0)-(a+b)Bu'(0)+abB^2u(0)].  \label{E4a}
	\end{equation}
	Using formulas \eqref{**}, \eqref{E4a}, we obtain
	\begin{equation} \label{E4b}
		\begin{aligned}
			f(t)&= e^{tbB}(u'(0)-aBu(0))\\
			&\quad +\frac{1}{1+b}B^{-1}\Big( e^{t bB}-e^{-tB}\Big) ( u''(0)-(a+b)Bu'(0)+abB^2u(0)).	
		\end{aligned}
	\end{equation}
	Using formulas \eqref{**}, \eqref{E4b}, we obtain by integration,
	
	\begin{equation}
		\begin{aligned}
			&u(t)	=e^{taB}\varphi+\dfrac{1}{b-a}B^{-1}\Big[e^{tbB}-e^{taB}\Big](\psi -aB\varphi)\\
			&\dfrac{1}{1+b}B^{-2}\Big[\dfrac{1}{b-a}e^{tbB}-\dfrac{1+b}{(1+a)(b-a)}e^{taB}+\dfrac{1}{1+a}e^{-tB}\Big](\xi-(a+b)B\psi+abB\varphi),
		\end{aligned}\label{sutt}
	\end{equation} 
Now, the formula \eqref{sut} holds after some manipulations.
\end{proof}




\begin{thebibliography}{}
\bibitem{Adamjan2002} V.  Adamjan , V. Pivovarchik, C. Tretter, \textit{On a class of non-self-adjoint quadratic matrix operator pencils arising in elasticity theory}, J. Oper. Theory
47(2) (2002), 325--341. 
\bibitem{Amann1995} H. Amann, \textit{Linear and quasilinear parabolic problems. Volume I: Abstract Linear Theory}, Birkh\"auser Verlag, Basel, 1995.
\bibitem{Balakrishnan60} A. V. Balakrishnan, \textit{Fractional powers of closed operators and the semi-groups generated by them}, Pacific J. Math. 10 (1960), 419--437.
\bibitem{Belluzi2022} M. Belluzi, M. J. D. Nascimento and K. Schiabel, \textit{On a cascade system of Schrödinger equations. Fractional powers approach}, J. Math. Anal. Appl. 506 (1) (2022) 125644.
\bibitem{BezerraCCN2017}  F. D. M. Bezerra, A. N. Carvalho, J. Cholewa, and M. J. D. Nascimento, \textit{Parabolic approximation of damped wave equations via fractional powers: fast growing nonlinearities and continuity of the dynamics}, J. Math. Anal. Appl., 450 (1) (2017) 377--405. 
\bibitem{BezerraCN2020}  F. D. M. Bezerra, A. N. Carvalho, J. Cholewa, and M. J. D. Nascimento,\textit{ Fractional approximations of abstract semilinear parabolic problems}, Discrete Contin. Dyn. Syst. Ser. B,  2020, 25(11): 4221--4255.
\bibitem{BezerraN2019} F. D. M. Bezerra and M. J. D. Nascimento, \textit{Non-autonomous approximations governed by the fractional powers of damped wave operators}, Electron. J. Differential Equations,  2019, (2019) 1--19. 
\bibitem{BezerraS2020}  F. D. M. Bezerra and L. A. Santos, F\textit{ractional powers approach of operators for abstract evolution equations of third order in time}, J. Differential Equations,  269, 7, (2020), 5661--5679.
\bibitem{Chen2023} W. Chen, Critical exponent and sharp lifespan estimates for semilinear third-order evolution equations, 2023, arXiv: 2302.02063.
\bibitem{Cholewa2018} J. Cholewa and T. D\l otko, \textit{Fractional Navier-Stokes equation}, Discrete Contin. Dyn. Syst. Ser. B 23, 2018, 2967--2988.
\bibitem{Dlotko2018} T. D\l otko, \textit{Navier-Stokes equation and its fractional approximations}, Appl. Math. Optim. 77, 2018, 99--128. 
\bibitem{Fattorini1983} H. O. Fattorini, \textit{The Cauchy Problem}.  In Encyclopedia of Mathematics and its Applications, Vol. 18. Addison-Wesley Publishing Company: Reading, Massachusetts, 1983.
\bibitem{Hasse2006} M. Hasse, \textit{The Functional Calculus for Sectorial Operators}. Birkh\"auser Verlag, 2006.
\bibitem{Henry1981} D. Henry, \textit{Geometric theory of semilinear parabolic equations}, Lecture Notes in Mathematics 840, Springer-Verlag, Berlin, 1981.
\bibitem{Kato} T. Kato,
{\emph{Perturbation theory for linear operators}},
Springer-Verlag, New York (1995).
\bibitem{Kato61} T. Kato, \textit{Fractional powers  of dissipative operators},  Proc. Japan Acad. 13 (3) (1961), 246--274. 

\bibitem{Krein67}  S. G. Kre\v{\i}n, \textit{Linear Differential Equations in Banach Spaces}, Moscou, 1967.

\bibitem{Martinez88} C. Martinez, M. Sanz and L. Marco, \textit{Fractional powers of operators}, J. Math. Soc. Japan
40 (1988), 331--347.
\bibitem{MartinezS2001} C. Mart\'inez  and M. Sanz, \textit{The Theory of Fractional Powers of Operators}, North-Holland Mathematics Studies, V 187, (2001)
\bibitem{MartinezS1997} C. Mart\'inez and M. Sanz,\textit{ Spectral mapping theorem for fractional powers in locally convex spaces}, Annali della Scuola Normale Superiore di Pisa, Classe di Scienze $4^e$ s\'erie, tome 24, no 4 (1997), 685--702.
\bibitem{Pazy1983}  A. Pazy, \textit{Semigroup of linear operators and applications to partial differential equations}, Springer-Verlag. New York, 1983.
\bibitem{Sobolevskii1966} P. E. Sobolevski\u{\i}, \textit{Equations of parabolic type in a Banach space}, Amer. Math. Soc. Transl.,  49 (1966), 1-62.
\bibitem{Thompson72} P. A. Thompson, \textit{Compressible-Fluid Dynamics}, McGraw-Hill, New York, 1972.
\bibitem{Triebel1978} H. Triebel, \textit{Interpolation theory, function spaces, differential operators}, Veb Deutscher, 1978.
\bibitem{Wu2003} J. Wu, \textit{Generalized MHD equations}, J. Differential Equations, 195 (2), 2003, 284--312.
\end{thebibliography}
\end{document}